\theoremstyle{plain}
\newtheorem{theorem}{Theorem}[section]
\newtheorem{lemma}[theorem]{Lemma}
\newtheorem{corollary}[theorem]{Corollary}
\newtheorem{refthm}{Theorem}
\newtheorem*{ryabykhthm}{Ryabykh's Theorem}
\newtheorem*{cauchygreenthm}{Cauchy-Green Theorem}
\theoremstyle{definition}
\numberwithin{equation}{section}
\newcommand{\conj}[1]{\overline{#1}}
\DeclareMathOperator{\sgn}{sgn}
\DeclareMathOperator{\Rp}{Re}
\begin{document}

\subjclass[2010]{30H10, 30H20}

\title[An Extension of Ryabykh's Theorem For $1<p<\infty$]{Extremal Problems in Bergman Spaces and an Extension of 
Ryabykh's $H^p$ Regularity Theorem For $1<p<\infty$}
\author{Timothy Ferguson}
\address{Department of Mathematics\\University of Alabama\\Tuscaloosa, AL}
\email{tjferguson1@ua.edu}

\date{\today}

\begin{abstract}
We study linear extremal problems in the Bergman space 
$A^p$ of the unit disc, where $1 < p < \infty$. 
Given a functional on the dual space of $A^p$ with 
representing kernel $k \in A^q$, where $1/p + 1/q = 1$, 
we show that if $q \le q_1 < \infty$ and $k \in H^{q_1}$, then 
$F \in H^{(p-1)q_1}$. This result was previously 
known only in the case where $p$ is an even integer. 
We also discuss related results. 
\end{abstract}

\maketitle

An analytic function $f$ in the unit disc ${\mathbb{D}}$ belongs to the 
Bergman space $A^p$ if 
\begin{equation*}  \|f\|_{A^p} = \left\{ \int_{{\mathbb{D}}} |f(z)|^p d\sigma(z)\right\}^{1/p} < \infty, \end{equation*}
where $\sigma$ is normalized area measure
(so that $\sigma({\mathbb{D}})=1$). 
For $1<p<\infty$, each functional $\phi \in (A^p)^*$ can be uniquely 
represented by 
\begin{equation*}
\phi(f) = \int_{{\mathbb{D}}} f \conj{k}\, d\sigma
\end{equation*}
for some $k \in A^q$ (called the kernel of $\phi$), 
where $q = p/(p-1)$ is the conjugate index.

In this paper we study regularity results for 
the extremal problem of maximizing 
$\Rp\phi(f)$ among all functions $f \in A^p$ of unit norm.   
An important regularity result is Ryabykh's theorem, 
which states that if the kernel is 
actually in the Hardy space $H^q$, then the extremal function must be in 
the Hardy space $H^p$ (see \cite{Ryabykh} or \cite{tjf1} for a proof).  In 
\cite{tjf2}, the following extensions of Ryabykh's theorem are shown 
in the case where $p$ is an even integer:
\begin{itemize}
\item For $q \le q_1 < \infty$, the extremal function 
$F \in H^{(p-1)q_1}$ if the kernel $k \in H^{q_1}$ 
(if $q_1 = q$ this is Ryabykh's theorem).
\item If the Taylor coefficients of $k$ satisfy a certain bound, then 
$F \in H^\infty$.
\item The map sending a kernel $k \in H^q$ to 
its extremal function $F\in A^p$ is a continuous map from 
$H^q \setminus \{0\}$ into $H^p$. 
\item For $q \le q_1 < \infty$, if the extremal function 
$F \in H^{(p-1)q_1}$, then the kernel $k \in H^{q_1}$.
(In fact, the proof in \cite{tjf2} shows that this result holds 
if $1 < q_1 < \infty$). 
\end{itemize}
We show that the first two results above hold for 
all $p$ such that $1 < p < \infty$.  
We also show a weaker form of the third result holds for 
$1 < p < \infty$, while a weaker form of the fourth holds if 
$2 \le p < \infty$. 
It is an open problem 
whether the last two results hold in their strong forms for 
$1 < p < \infty$. 

To overcome certain technical difficulties in the proof, we rely on 
regularity results from \cite{Khavinson_Stessin} 
for extremal functions with polynomial kernels.  
These results rely on regularity 
theorems for complex analogues of $p$-harmonic functions.  Our paper 
also uses an inequality based on Littlewood-Paley theory that was 
proved in \cite{tjf2}.

\section{Extremal Problems and Ryabykh's Theorem}\label{intro}
We now introduce the topic of the paper in more detail.  
(See \cite{tjf2} for a slightly more detailed introduction). 
If $f$ is an 
analytic function, $S_n f$ denotes its $n^{th}$ Taylor polynomial 
at the origin. We denote Lebesgue area measure by $dA$, and 
normalized area measure by $d\sigma$, so that $\sigma(\mathbb{D}) = 1$.

We recall some basic facts about Hardy and Bergman spaces.  For proofs 
and further information, see \cite{D_Hp} and \cite{D_Ap}. 
Suppose that $f$ is analytic in the unit disc.  For $0 < p < \infty$ and 
$0 < r < 1,$ the $p^{\mathrm{th}}$ integral mean of $f$ 
at radius $r$ is 
\[M_p(f,r)=
\bigg\{ \frac{1}{2\pi} \int_{0}^{2\pi} |f(re^{i\theta})|^p d\theta 
\bigg\}^{1/p},\]
whereas if $p=\infty$ it is
\[
M_\infty(f,r) = \max_{0\le \theta < 2\pi} |f(re^{i\theta})|.\]
The integral means are increasing 
functions of $r$ for fixed $f$ and $p$. 
An analytic function $f$ is in 
the Hardy space $H^p$ if $M_p(f,r)$ is bounded.  
The radial limit 
$f(e^{i\theta}) = \lim_{r \rightarrow 1^-} f(re^{i\theta})$ 
exists for almost every $\theta$ if $f$ is an $H^p$ function.
For $0 < p < \infty$, 
we have that $f(re^{i\theta})$ approaches 
the boundary function $f(e^{i\theta})$ in $L^p(d \theta)$ 
as $r \rightarrow 1^{-}$. 
Two $H^p$ functions whose boundary values agree on some set of 
positive measure are identical.  
The space $H^p$ is a Banach space with norm 
\[\| f \|_{H^p} = \sup_r M_p(f,r) = \|f(e^{i\theta})\|_{L^p}.\]
Thus we can regard $H^p$ as a subspace of 
$L^p({\mathbb{T}}),$ where ${\mathbb{T}}$ denotes the unit circle. 
If $1 < p < \infty$, the space $H^p$ is reflexive. 

If $f\in H^p$ and $1<p<\infty$, then $S_n f \rightarrow f$ in 
$H^p$ as $n \rightarrow \infty$, where $S_n f$ is the 
$n^{\textrm{th}}$ partial sum of the Taylor series for $f$ centered at the 
origin.  
The Szeg\H{o} projection $S$ maps each function $f \in L^1({\mathbb{T}})$ to 
an analytic function defined by 
\begin{equation*}
Sf(z) = \frac{1}{2\pi}\int_0^{2\pi} \frac{f(e^{it})}{1-e^{-it}z} dt
\end{equation*}
for $|z| < 1$.
It fixes $H^1$ functions and maps $L^p$ boundedly onto $H^p$ for 
$1 < p < \infty$.   
If $f \in L^p$ for $1<p<\infty$ and 
$\displaystyle f(\theta) = \sum_{n=-\infty}^\infty a_n e^{in\theta},$ 
then $\displaystyle Sf(z) = \sum_{n=0}^\infty a_n z^n.$

For $1 < p < \infty,$ the dual space 
$(A^p)^*$ is isomorphic to $A^q$, 
where $1/p + 1/q = 1$.  A functional $\phi \in (A^p)^*$ corresponds
to $k\in A^q$ if $\phi(f) = \int_{{\mathbb{D}}} f(z)\conj{k(z)}\,d\sigma(z)$. 
This correspondence is conjugate linear and 
does not preserve norms, but it is the case that 
\begin{equation}\label{A_q_isomorphism}
\| \phi \|_{(A^p)^*} \le \| k \|_{A^q} \le C_p \| \phi \|_{(A^p)^*},
\end{equation}
where $C_p$ is a constant depending only on $p$.  It can be shown that 
$C_p \le \pi \csc(\pi/p)$ (see \cite{Dostanic_BP_Norm} and the proof 
of Theorem 6 in Section 2.4 of \cite{D_Ap}). 
As with Hardy spaces, 
if $f \in A^p$ for $1<p<\infty$, then $S_n f \rightarrow f$ in $A^p$ 
as $n\rightarrow \infty$.

In this paper the only Bergman spaces we consider are those with
$1<p<\infty$. 
For a given linear 
functional $\phi \in (A^p)^*$ such that $\phi \ne 0$,
we study the extremal problem of finding a function $F \in A^p$ 
with norm 
$\|F\|_{A^p} = 1$ such that 
\begin{equation}\label{norm1}
\Rp \phi(F) = \sup_{\|g\|_{A^p}=1} \Rp \phi(g) = \| \phi \|.
\end{equation}
Such a function $F$ is called an extremal function, and 
we say that $F$ is an extremal function for a function $k \in A^q$ 
if $F$ solves problem 
\eqref{norm1} for the functional $\phi$ with kernel $k$. 
Note that for $p=2$ the extremal function is $F = k/\|k\|_{A^2}.$ 

For $1 < p < \infty$ an extremal function always exists and is unique, 
which follows from 
the uniform convexity of $A^p$.  Also, for any function $F$ of 
unit $A^p$ norm, there is some $k$ such that $F$ solves \eqref{norm1} 
for the functional $\phi$ 
with kernel $k$, and such a $k$ is unique up to a positive scalar 
multiple. Furthermore, one such $k$ is given by 
$\mathcal{P}(|F|^p/ \overline{F})$, where $\mathcal{P}$ is the 
Bergman projection (see \cite{tjf1} and \cite{tjf3}).

This problem has been studied by many authors, e.g.\ in 
\cite{DKSS_Pac}, \cite{tjf3}, \cite{Hedenmalm_canonical_A2}, 
\cite{Khavinson_Stessin}, \cite{Ryabykh_certain_extp}
and \cite{Dragan}.
Regularity results for solutions to this and similar problems can 
be found in 
\cite{tjf1}, \cite{tjf2},
\cite{Khavinson_McCarthy_Shapiro} and
\cite{Ryabykh}.
See also the survey \cite{Beneteau_Khavinson_survey}.

Even though it is well known, we restate 
the Cauchy-Green theorem, which is an important tool in this paper.
\begin{cauchygreenthm}
If $\Omega$ is a region in the plane with 
piecewise smooth boundary and $f\in C^1({\overline{\Omega}})$, then
\[ \frac{1}{2 i} \int_{\partial \Omega} f(z) \, dz = \int_{\Omega} 
\frac{\partial}{\partial \conj{z}} f(z) \,dA(z),\]
where $\partial \Omega$ denotes the 
boundary of $\Omega$.
\end{cauchygreenthm}

The next result is an important characterization of extremal functions in 
$A^p$ for $1<p<\infty$ (see \cite{Shapiro_Approx}, p.~55). The last part 
of the theorem follows from the previous parts by a standard 
approximation argument. 
\begin{refthm}\label{integral_extremal_condition} 
Let $1 < p < \infty$ and let $\phi \in (A^p)^*$.   
A function $F \in A^p$ with $\|F\|_{A^p} = 1$ satisfies 
\[\Rp \phi(F) = \sup_{\|g\|_{A^p} =1} \Rp \phi(g) = \| \phi \|\]
if and only if $\Rp \phi(F) > 0$ and 
\[\int_{{\mathbb{D}}} h |F|^{p-1} \conj{\sgn F}  \, d\sigma = 0\] for all 
$h \in A^p$ with $\phi(h) = 0.$  
If $F$ satisfies the above conditions, then 
\[\int_{{\mathbb{D}}} h |F|^{p-1} \conj{\sgn F}\,  d\sigma 
= \frac{\phi(h)}{\| \phi \|}\]
for all $h\in A^p.$
Furthermore, suppose that 
$\phi(f) = \int_{\mathbb{D}} f \overline{k} \,d\sigma$ for 
some $k \in H^\infty$, and that $F \in H^\infty$.
Then 
\[
\int_{{\mathbb{D}}} h |F|^{p-1} \conj{\sgn F}\,  d\sigma 
= \int_{\mathbb{D}} h \overline{k}\,  d\sigma
\]
for any function $h \in L^1$.
\end{refthm}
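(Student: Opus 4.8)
The plan is to bootstrap the last identity from the second formula of the theorem, which already gives
\[
\int_{\mathbb{D}} h\,|F|^{p-1}\conj{\sgn F}\,d\sigma = \frac{\phi(h)}{\|\phi\|}
= \frac{1}{\|\phi\|}\int_{\mathbb{D}} h\,\conj{k}\,d\sigma
\]
for every holomorphic $h\in A^p$. Since the left-hand side of the target identity does not involve $k$ while the right-hand side is homogeneous in $k$, the statement can hold only for the distinguished normalization in which $\|\phi\|=1$; this is exactly the choice $k=\mathcal{P}(|F|^{p}/\conj F)=\mathcal{P}(|F|^{p-1}\sgn F)$ singled out earlier, for which $\phi(F)=\int_{\mathbb{D}}|F|^p\,d\sigma=1=\|\phi\|$. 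Under this normalization the displayed formula reads $\int_{\mathbb{D}} h\,|F|^{p-1}\conj{\sgn F}\,d\sigma=\int_{\mathbb{D}} h\,\conj k\,d\sigma$ for $h\in A^p$, and the entire content of the last assertion is to widen the class of test functions from $A^p$ to all of $L^1(\mathbb{D})$.

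First I would observe that the two new hypotheses make both sides bounded linear functionals of $h$ on $L^1(\mathbb{D})$: because $F\in H^\infty$ we have $\big||F|^{p-1}\conj{\sgn F}\big|=|F|^{p-1}\le\|F\|_{H^\infty}^{p-1}$, and because $k\in H^\infty$ we have $|\conj k|\le\|k\|_{H^\infty}$, so both weights lie in $L^\infty(\mathbb{D})$ and each integral converges absolutely and is continuous in the $L^1$ norm of $h$. Since simple functions are dense in $L^1(\mathbb{D})$ and lie in $L^2(\mathbb{D})$, it is enough by this continuity to establish the identity for bounded $h$, after which the claim for all of $L^1$ follows. For bounded $h$ I would try to push both integrals through the Bergman projection at once: self-adjointness of $\mathcal{P}$ and $\mathcal{P}k=k$ give $\int_{\mathbb{D}} h\,\conj k\,d\sigma=\int_{\mathbb{D}}(\mathcal{P}h)\,\conj k\,d\sigma$, with $\mathcal{P}h\in A^p$, so the holomorphic case applies to the right side; one then wants the same replacement $h\mapsto\mathcal{P}h$ to be harmless on the left.

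The main obstacle is exactly this last point: one must show that $\int_{\mathbb{D}}\big(h-\mathcal{P}h\big)\,|F|^{p-1}\conj{\sgn F}\,d\sigma=0$, equivalently that the weight $|F|^{p-2}F$ may be replaced by its holomorphic part $k=\mathcal{P}(|F|^{p-2}F)$ inside the pairing against $h-\mathcal{P}h$. This is where the $H^\infty$ regularity of $F$ and of $k$ has to be used: writing $|F|^{p-2}F-k=(I-\mathcal{P})(|F|^{p-2}F)$, the goal is to exploit the boundedness of $F$ and $k$ to show this pairing vanishes, for instance by integrating by parts with the Cauchy--Green theorem to transfer the area pairing to the boundary circle, where $F$ and $k$ have genuine boundary values, and by approximating $k$ with polynomial kernels so as to invoke the regularity results of Khavinson--Stessin. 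I expect the analytic bookkeeping of this reduction — controlling the boundary contributions produced by Cauchy--Green and justifying the polynomial approximation uniformly in $h$ — to be the hardest part, the surrounding skeleton (projection together with $L^1$-density and continuity) being routine.
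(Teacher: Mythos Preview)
You correctly spot that the last identity, as printed, cannot hold unless $\|\phi\|=1$: a factor $1/\|\phi\|$ is missing on the right (compare with the preceding clause and with how the identity is actually used in the proof of Theorem~\ref{norm_formula_ext}). There is, however, a second misprint that you took at face value: the test functions should range over $A^1$, not all of $L^1(\mathbb{D})$. The only place this clause is invoked in the paper is in the proof of Theorem~\ref{norm_formula_ext}, where it is applied with the \emph{analytic} function $(zh)'F + \tfrac{p}{2}\,zhF'$; this lies in $A^1$ (since $F'\in A^r$ for some $r>1$ by Lemma~\ref{kpolybound}) but need not lie in $A^p$, which is precisely why an extension beyond $A^p$ is required.

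Your ``main obstacle'',
\[
\int_{\mathbb{D}}\bigl(h-\mathcal{P}h\bigr)\,|F|^{p-1}\,\overline{\sgn F}\,d\sigma = 0
\qquad\text{for every bounded }h,
\]
is equivalent, via the self-adjointness of $\mathcal{P}$ on $L^2$, to the assertion that $|F|^{p-2}F\in A^2$, i.e.\ that $|F|^{p-2}F$ is holomorphic. For $p\neq 2$ and non-constant $F$ this is simply false, since $|F|^{p-2}$ is real-valued and non-constant. No Cauchy--Green manoeuvre or approximation of $k$ by polynomial kernels can rescue this step; the statement you are trying to prove is not true for non-analytic $h$.

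With the intended hypothesis $h\in A^1$, the ``standard approximation argument'' the paper alludes to is just the routine half of your own outline: because $F,k\in H^\infty$, both $|F|^{p-1}\,\overline{\sgn F}$ and $\overline{k}$ lie in $L^\infty(\mathbb{D})$, so each side defines a bounded linear functional on $A^1$; the two functionals agree on $A^p$, hence on all polynomials, by the previous part of the theorem; and polynomials are dense in $A^1$. No Bergman-projection trick and no integration by parts are needed.
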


Ryabykh's theorem is a result for extremal problems 
in Bergman spaces that involves Hardy space regularity.  
It says that if the kernel for a 
linear functional is not only in $A^q$ but also in $H^q$, then the extremal 
function is in $H^p$ as well as $A^q$.  

\begin{ryabykhthm}\label{ryabykh_thm}
Let $1 < p < \infty$ and let $1/p + 1/q = 1.$  Suppose that 
$\phi \in (A^p)^*$ and $\phi(f) = \int_{{\mathbb{D}}} f \conj{k} \, d\sigma$ 
for some $k \in H^q$.
Then the solution $F$ to the extremal problem \eqref{norm1} belongs to 
$H^p$ and satisfies 
\begin{equation}\label{ryabkh_estimate}
\|F\|_{H^p} \le \Bigg\{ \bigg[ \max(p-1,1)
\bigg]\frac{C_p\|k\|_{H^q}}{\| k \|_{A^q}}\Bigg\}^{1/(p-1)},
\end{equation}
where $C_p$ is the constant in \eqref{A_q_isomorphism}.
\end{ryabykhthm}
Ryabykh proved that $F \in H^p$ in \cite{Ryabykh}.  
The bound \eqref{ryabkh_estimate} was 
proved in \cite{tjf1} by a variant of Ryabykh's proof.

In \cite{tjf2}, it is shown that if $p$ is an even integer, then 
for $q \le q_1 < \infty$ the extremal function 
$F \in H^{(p-1)q_1}$ if and only if the kernel $k \in H^{q_1}$. 
It is also shown that if the Taylor coefficients of $k$ 
satisfy a certain bound then 
$F \in H^\infty$, and that 
the map sending a kernel $k \in H^q$ to 
its extremal function $F\in A^p$ is a continuous map from 
$H^q \setminus \{0\}$ 
into $H^p$.   
We show that some of these results hold for any $p$ such that
 $1 < p < \infty$ and that the others hold in weaker forms. 
It is still an open problem whether the weaker results can be improved 
so that they correspond to the results from the case when 
$p$ is an even integer. 

We need the following lemma for technical reasons. 
\begin{lemma}\label{kpolybound}
If $k$ is a polynomial, then $F' \in A^r$ for some $r > 1$, and 
$F \in H^\infty$.
\end{lemma}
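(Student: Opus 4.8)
The plan is to reduce the statement to the boundary regularity theory for extremal functions with polynomial kernels developed in \cite{Khavinson_Stessin}, and then to extract the two conclusions by elementary estimates. First I would record the form of the extremal condition in this situation. By Theorem~\ref{integral_extremal_condition}, writing $|F|^{p-1}\conj{\sgn F} = |F|^{p-2}\conj{F}$ and pairing against analytic test functions, the extremality of $F$ is equivalent to
\[
\mathcal{P}\!\left(|F|^{p-2}F\right) = \frac{k}{\|\phi\|},
\]
where $\mathcal{P}$ is the Bergman projection; that is, $|F|^{p-2}F - k/\|\phi\|$ is annihilated by $\mathcal{P}$. When $k$ is a polynomial this is precisely the equation analyzed in \cite{Khavinson_Stessin}, whose regularity theory (via the associated degenerate-elliptic, $p$-harmonic-type equation) shows that the extremal function $F$ extends to a H\"older continuous function on $\overline{\mathbb{D}}$; that is, $F \in \Lambda_\alpha(\overline{\mathbb{D}})$ for some $\alpha \in (0,1)$, meaning $|F(z) - F(w)| \le C|z-w|^\alpha$ on $\overline{\mathbb{D}}$.

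Granting this, the conclusion $F \in H^\infty$ is immediate: a continuous function on the compact set $\overline{\mathbb{D}}$ is bounded, so $\|F\|_{H^\infty} = \sup_{z \in \mathbb{D}} |F(z)| \le \max_{\overline{\mathbb{D}}} |F| < \infty$.

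For the statement about $F'$, I would invoke the Hardy--Littlewood theorem relating the Lipschitz class to the growth of the derivative: since $F \in \Lambda_\alpha$ with $0 < \alpha < 1$, there is a constant $C$ with
\[
|F'(z)| \le C\,(1-|z|)^{\alpha - 1}, \qquad z \in \mathbb{D}.
\]
Integrating in polar coordinates then gives, for any $r > 0$,
\[
\int_{\mathbb{D}} |F'|^r \, d\sigma \le 2 C^r \int_0^1 (1-\rho)^{(\alpha-1)r}\, \rho\, d\rho,
\]
and the right-hand integral converges precisely when $(\alpha-1)r > -1$, i.e.\ when $r < 1/(1-\alpha)$. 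Since $\alpha > 0$ forces $1/(1-\alpha) > 1$, any choice of $r$ with $1 < r < 1/(1-\alpha)$ yields $F' \in A^r$, completing the proof.

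The substance of the argument, and the main obstacle, is the boundary regularity of $F$, which is why we borrow it from \cite{Khavinson_Stessin} rather than reprove it. The delicate point there is that when $p < 2$ the nonlinearity $|F|^{p-2}F$ is singular at the zeros of $F$, so the regularity theory for the governing equation must accommodate this degeneracy; once H\"older continuity up to the boundary is in hand, the remaining deductions (the Hardy--Littlewood growth estimate and the elementary radial integral) are routine.
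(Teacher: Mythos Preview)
Your proposal is correct and takes essentially the same approach as the paper: both defer the substantive regularity statement to \cite{Khavinson_Stessin}, with the paper simply citing Corollary~2.1 and page~944 there, while you spell out the intermediate step (H\"older continuity on $\overline{\mathbb{D}}$) and the standard Hardy--Littlewood deduction that $|F'(z)|\le C(1-|z|)^{\alpha-1}$, hence $F'\in A^r$ for $1<r<1/(1-\alpha)$. Your extra detail is consistent with what \cite{Khavinson_Stessin} actually proves, so there is no gap.
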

This follows from Corollary 2.1 in \cite{Khavinson_Stessin}. 
See page 944 of that paper for a justification of the fact that 
$F' \in A^r$.

The next lemma is a simplified version of Lemma 1.2 from \cite{tjf2}.
\begin{lemma}\label{klb}
Suppose that $1 < p_1 < \infty$ and $1<p_2,p_3 \le \infty$, 
and also that   
\[1 = \frac{1}{p_1} + \frac{1}{p_2} + \frac{1}{p_3}.\]
Let $f_1 \in H^{p_1}$, $f_2 \in H^{p_2}$, and $f_3 \in H^{p_3}$.
Suppose further that $f_1 f_2 f_3'$ is in $A^1$.
Then 
\begin{equation*}
\left| \int_{{\mathbb{D}}} \conj{f_1} f_2 f_3' \, d\sigma \right| \le C 
\|f_1\|_{H^{p_1}} \|f_2\|_{H^{p_2}} \|f_3\|_{H^{p_3}} 
\end{equation*}
where $C$ depends only on $p_1$ and $p_2$. 
Moreover, if $p_3 < \infty$ then
\begin{equation*}
\int_{{\mathbb{D}}} \conj{f_1} f_2 f_3'\, d\sigma = 
\lim_{n\rightarrow\infty} \int_{{\mathbb{D}}} \conj{f_1} f_2 (S_n f_3)'\, d\sigma.
\end{equation*}
\end{lemma}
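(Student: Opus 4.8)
The plan is to separate the two assertions, deriving the norm bound from the Littlewood--Paley inequality of \cite{tjf2} and then using that same inequality, applied to the Taylor tails of $f_3$, to establish the limit formula. First I would record that everything in sight is well defined: the hypothesis $f_1 f_2 f_3' \in A^1$ guarantees that $\conj{f_1} f_2 f_3'$ is absolutely integrable against $d\sigma$, so the left-hand side is a genuine complex number. Since $\|g\|_{A^p} \le \|g\|_{H^p}$ for $g \in H^p$, each $f_i$ lies in $A^{p_i}$, and Hölder's inequality gives $f_1 f_2 \in A^s$ with $1/s = 1/p_1 + 1/p_2 \le 1$, hence $f_1 f_2 \in A^1$; multiplying by the bounded polynomial $(S_n f_3)'$ shows $\conj{f_1} f_2 (S_n f_3)' \in A^1$ as well, so every approximating integral is also well defined.

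For the bound I would invoke the Littlewood--Paley inequality established in \cite{tjf2} (the ``inequality based on Littlewood--Paley theory'' referred to in the introduction), which under the relation $1/p_1 + 1/p_2 + 1/p_3 = 1$ asserts that, for every $g \in H^{p_3}$ for which the integral is defined,
\begin{equation*}
\left| \int_{\mathbb{D}} \conj{f_1} f_2 g'\, d\sigma \right| \le C \|f_1\|_{H^{p_1}} \|f_2\|_{H^{p_2}} \|g\|_{H^{p_3}},
\end{equation*}
with $C$ depending only on $p_1$ and $p_2$ (the absence of $p_3$ from the constant reflecting that the derivative always falls on the third factor). Taking $g = f_3$ yields the desired estimate at once.

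For the limit formula, assume now $p_3 < \infty$, so that $1 < p_3 < \infty$ and hence $S_n f_3 \to f_3$ in $H^{p_3}$ by the fact recalled in Section~\ref{intro}. I would then apply the inequality above to the difference $f_3 - S_n f_3 \in H^{p_3}$, which is legitimate because $\conj{f_1} f_2 (f_3 - S_n f_3)'$ is the difference of the two $A^1$ functions identified above and so again lies in $A^1$. This gives
\begin{equation*}
\left| \int_{\mathbb{D}} \conj{f_1} f_2 f_3'\, d\sigma - \int_{\mathbb{D}} \conj{f_1} f_2 (S_n f_3)'\, d\sigma \right| \le C \|f_1\|_{H^{p_1}} \|f_2\|_{H^{p_2}} \|f_3 - S_n f_3\|_{H^{p_3}},
\end{equation*}
and the right-hand side tends to $0$ as $n \to \infty$, which is exactly the limit formula.

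The main obstacle is the inequality itself, which is why the paper imports it from \cite{tjf2} rather than reproving it. It is worth stressing why the elementary route fails: one cannot simply pass to the limit in $\int_{\mathbb{D}} \conj{f_1} f_2 (S_n f_3)'\, d\sigma$ by dominated convergence, since the derivatives $(S_n f_3)'$ are not uniformly controlled near the boundary, and a direct attempt to bound the annular tails $\int_{\rho < |z| < 1} |\conj{f_1} f_2 (S_n f_3)'|\, d\sigma$ uniformly in $n$ requires precisely the square-function control supplied by the Littlewood--Paley estimate. Once that estimate is granted, the only point needing care is the verification that it may be applied to the tails $f_3 - S_n f_3$, and this is furnished by the $A^1$ hypothesis together with the embedding $H^{p_i} \subset A^{p_i}$; the rest is the one-line subtraction above.
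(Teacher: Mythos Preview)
Your proposal is correct and matches the paper's treatment: the paper does not prove this lemma at all but simply states that it is a simplified version of Lemma~1.2 in \cite{tjf2}, which is exactly the Littlewood--Paley estimate you invoke. Your additional derivation of the limit formula from the inequality (by applying it to the tail $f_3-S_nf_3$ and using $S_nf_3\to f_3$ in $H^{p_3}$) is a valid and standard argument, and is more explicit than anything the present paper provides.
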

The assumption on $f_1 f_2 f_3'$ is not essential, but without it the 
integral on the left needs to be replaced by a principle value. 
In the next lemma, the notation $\|f\|_{A^\infty}$ means the $L^\infty$ norm 
of $f$ on the disc, which of course is equivalent to the $H^\infty$ norm.
\begin{lemma}\label{lemma:iso_h_d}
If $1 \le p \le \infty$ and 
$f$ is an analytic function with derivative in $A^p$, then 
\[ \|f \|_{A^{2p}} \le \| f \|_{H^p} \le \| f' \|_{A^p} + |f(0)|. \]
The first inequality holds if $f \in H^p$. 
\end{lemma}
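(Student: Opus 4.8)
The plan is to treat the two inequalities separately, since they rest on different ideas, and to dispose of $p=\infty$ by trivial modifications of the $p<\infty$ arguments.

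For the second inequality $\|f\|_{H^p}\le\|f'\|_{A^p}+|f(0)|$, I would start from the fundamental theorem of calculus along each radius: for fixed $\theta$, writing $f(re^{i\theta})=f(0)+\int_0^r f'(te^{i\theta})e^{i\theta}\,dt$ gives the pointwise estimate $|f(re^{i\theta})|\le|f(0)|+\int_0^r|f'(te^{i\theta})|\,dt$. Taking the $L^p$ norm in $\theta$ against the probability measure $d\theta/2\pi$, then applying the triangle inequality followed by Minkowski's integral inequality to move the $L^p$ norm inside the radial integral, yields
\[ M_p(f,r)\le|f(0)|+\int_0^r M_p(f',t)\,dt\le|f(0)|+\int_0^1 M_p(f',t)\,dt. \]
As the right-hand side is independent of $r$, taking the supremum over $r$ reduces the whole inequality to the single claim $\int_0^1 M_p(f',t)\,dt\le\|f'\|_{A^p}$.

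To prove this claim I would use that $d\nu=2t\,dt$ is a probability measure on $[0,1]$ and that $\|f'\|_{A^p}^p=\int_0^1 M_p(f',t)^p\,d\nu(t)$. Writing $m(t)=M_p(f',t)$, which is increasing in $t$, Jensen's inequality for the convex function $x\mapsto x^p$ gives $\left(\int_0^1 m\,d\nu\right)^p\le\int_0^1 m^p\,d\nu=\|f'\|_{A^p}^p$, so it suffices to show $\int_0^1 m\,dt\le\int_0^1 m(t)\,2t\,dt$, that is, $\int_0^1 m(t)(2t-1)\,dt\ge 0$. Here the monotonicity of $m$ is exactly what is needed: since $2t-1$ has mean zero on $[0,1]$, subtracting the constant $m(1/2)$ leaves the integral unchanged, and $(m(t)-m(1/2))(2t-1)\ge 0$ pointwise because both factors change sign at $t=1/2$ in the same direction. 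For $p=\infty$ the argument is easier, as $M_\infty(f',t)\le\|f'\|_{A^\infty}$ directly gives $\int_0^1 M_\infty(f',t)\,dt\le\|f'\|_{A^\infty}$.

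For the first inequality $\|f\|_{A^{2p}}\le\|f\|_{H^p}$ with $f\in H^p$, the key is Carleman's isoperimetric inequality for analytic functions, which in normalized form reads $\|g\|_{A^2}\le\|g\|_{H^1}$ for $g$ analytic in $H^1$. I would apply it with $g=f^p$, since then $\|g\|_{A^2}^2=\int_{\mathbb{D}}|f|^{2p}\,d\sigma=\|f\|_{A^{2p}}^{2p}$ and $\|g\|_{H^1}=\|f\|_{H^p}^p$. The one subtlety is that $f^p$ need not be single-valued when $f$ has zeros, which I would circumvent by inner-outer factorization: writing $f=I\cdot O$ with $I$ inner and $O$ the zero-free outer factor, one has $|f|\le|O|$ throughout $\mathbb{D}$ and $|f|=|O|$ a.e.\ on $\mathbb{T}$, so $\|f\|_{A^{2p}}\le\|O\|_{A^{2p}}$ while $\|O\|_{H^p}=\|f\|_{H^p}$; since $O$ is zero-free, $O^p$ is a well-defined analytic $H^1$ function to which Carleman applies. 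For $p=\infty$ the inequality is trivial, as both norms equal $\sup_{\mathbb{D}}|f|$. The main obstacle I anticipate is precisely this first inequality: everything hinges on Carleman's inequality (invoked as a classical fact, or proved via the $H^1=H^2\cdot H^2$ factorization and Parseval) and on correctly passing to $f^p$ through factorization. By contrast the second inequality is essentially self-contained once reduced to $\int_0^1 M_p(f',t)\,dt\le\|f'\|_{A^p}$, its only delicate point being the use of the monotonicity of the integral means to obtain $\int_0^1 m(t)(2t-1)\,dt\ge 0$.
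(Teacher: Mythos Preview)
Your proof is correct and follows essentially the same strategy as the paper. For the first inequality the paper simply cites Vukoti\'c's isoperimetric argument, which is exactly the Carleman-plus-factorization proof you sketch; for the second inequality the paper likewise reduces matters to the monotonicity inequality $\int_0^1 g(t)\,dt \le \int_0^1 g(t)\,2t\,dt$ for increasing $g$, the only cosmetic difference being that the paper applies Jensen pointwise in $\theta$ first (working with $g=M_p^p(f',\cdot)$) whereas you apply Minkowski's integral inequality first and Jensen against $2t\,dt$ at the end (working with $g=M_p(f',\cdot)$).
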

\begin{proof}
The first inequality in this statement is from 
\cite{Dragan_Isoperimetric}, and actually holds for 
$ 0 < p \le \infty$.  
To prove the second inequality for $1 \le p < \infty$ 
note that if $f(0)=0$ then 
\[
\| f \|_{H^p}^p = 
\frac{1}{2\pi} \int_0^{2\pi} \left| 
  \int_0^1 f'(re^{i\theta}) e^{i\theta} \,dr \right|^p d\theta 
\le 
\frac{1}{2\pi} \int_0^{2\pi} \int_0^1 |f'(re^{i\theta})|^p \, dr \, d\theta
\]
by Jensen's inequality.  But by Fubini's theorem, the last displayed 
expression equals
\[
\int_0^1 M_p^p(r,f') \, dr =
\int_0^{1/2} M_p^p(r,f') + M_p^p(1-r,f') \, dr.
\]
But the integrand in the last integral is less than or equal to 
\[2r M_p^p(r,f') + 2(1-r) M_p^p(1-r,f')\] since 
$M_p^p(r,f') \le M_p^p(1-r, f')$. But this means that the 
last displayed integral is bounded above by 
\[
\int_0^1 M_p^p(r, f') 2r \, dr = \|f'\|_{A^p}^p.
\]
If $f(0) \ne 0$ note that 
\[
\| f \|_{H^p} \le \| f - f(0) \|_{H^p} + |f(0)| \le 
\|f'\|_{A^p} + | f(0)|.
\]
The proof of the second inequality in the case $p=\infty$ is even 
easier, since then 
$|f(e^{i\theta})| \le \sup_{0\le r < 1} |f'(re^{i\theta})| + |f(0)|$ 
for each $\theta$. 
\end{proof}

\section{The Norm-Equality For Polynomials}
Let $1 < p < \infty$ and let $q$ be its conjugate exponent.
Let $k \in H^q$ and let $F$ be the extremal function in $A^p$ 
for $k$. 
We will denote by $\phi$ the functional associated with $k$. 
Define $K$ by
\begin{equation}\label{K_eq}
K(z) = \frac{1}{z} \int_0^z k(\zeta)\, d\zeta;
\end{equation}
thus  $(zK)' = k$.
Note that $\|K\|_{H^q} \le \|k\|_{H^q}$ 
(see \cite{tjf1}, equation (4.2)). 

The first result in this article corresponds to Theorem 2.1 in \cite{tjf2}. 
\begin{theorem}\label{norm_formula_ext}
Let $1<p<\infty$, let $k$ be a polynomial that is not 
identically $0$, 
and let $F\in A^p$ be the extremal function for $k.$  Then 
\begin{equation*}
\frac{1}{2\pi} \int_0^{2\pi} |F(e^{i\theta})|^p h(e^{i\theta}) \, d\theta = 
\frac{1}{2\pi \|\phi\|} \int_{0}^{2 \pi}
F \left[\left(\frac{p}{2}\right)h\conj{k} + 
\left(1-\frac{p}{2}\right) (zh)'\conj{K}\right]\,d\theta
\end{equation*}
for every polynomial $h$.
\end{theorem}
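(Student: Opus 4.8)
The plan is to convert the boundary integral on the left into an integral over $\mathbb{D}$ by a conjugate form of the Cauchy--Green theorem, then to use the extremal characterization of Theorem \ref{integral_extremal_condition} to replace a factor $|F|^{p-2}\conj{F}$ by $\conj{k}/\|\phi\|$, and finally to return to the circle via two integration-by-parts identities that encode the relation $(zK)'=k$. Throughout, the hypothesis that $k$ is a polynomial enters through Lemma \ref{kpolybound}, which gives $F\in H^\infty$ and $F'\in A^r$ for some $r>1$; these make every integral below absolutely convergent and justify the manipulations. A convenient observation is that although $|F|^{p-2}$ may blow up at the zeros of $F$ when $p<2$, the product $|F|^{p-2}\conj{F}=|F|^{p-1}\conj{\sgn F}$ has modulus $|F|^{p-1}\le\|F\|_{H^\infty}^{p-1}$ and so is bounded. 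First I would conjugate the Cauchy--Green theorem to obtain $\int_\Omega \partial_z f\,dA=\tfrac{i}{2}\int_{\partial\Omega}f\,d\conj{z}$ and apply it to $f=z|F|^p h$ on $\mathbb{D}_\rho$, letting $\rho\to1^-$. Since $d\conj{z}=-i\conj{z}\,d\theta$ on the unit circle, the boundary term reduces to $\tfrac1{2\pi}\int_0^{2\pi}|F|^p h\,d\theta$, so the left side equals $\int_{\mathbb{D}}\partial_z(z|F|^p h)\,d\sigma$; expanding with $\partial_z|F|^p=\tfrac p2|F|^{p-2}\conj{F}F'$ and collecting terms gives
\[
\frac1{2\pi}\int_0^{2\pi}|F|^p h\,d\theta=\int_{\mathbb{D}}(zh)'\,|F|^p\,d\sigma+\frac p2\int_{\mathbb{D}}zhF'\,|F|^{p-2}\conj{F}\,d\sigma.
\]

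Next I would invoke the extremal condition. Writing $|F|^p=F\cdot|F|^{p-2}\conj{F}$ in the first integral, both area integrals take the form $\int_{\mathbb{D}}g\,|F|^{p-2}\conj{F}\,d\sigma$ with $g$ analytic, namely $g=F(zh)'$ and $g=zhF'$, and by Theorem \ref{integral_extremal_condition} each equals $\tfrac1{\|\phi\|}\int_{\mathbb{D}}g\,\conj{k}\,d\sigma$. For $g=F(zh)'\in A^p$ this is immediate; for $g=zhF'$, which need only lie in $A^r$, I would apply the identity to the polynomials $zh\,(S_nF)'$ and pass to the limit, using that $(S_nF)'\to F'$ in $A^r$ together with the boundedness of $|F|^{p-2}\conj{F}$ and of $\conj{k}$. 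This yields
\[
\frac1{2\pi}\int_0^{2\pi}|F|^p h\,d\theta=\frac1{\|\phi\|}\int_{\mathbb{D}}F(zh)'\,\conj{k}\,d\sigma+\frac{p}{2\|\phi\|}\int_{\mathbb{D}}zhF'\,\conj{k}\,d\sigma.
\]

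Finally I would return to the circle. Two identities, both consequences of the Cauchy--Green theorem and of $(zK)'=k$ (equivalently $\widehat{K}(n)=\widehat{k}(n)/(n+1)$), do the work: for analytic $g$ one has $\int_{\mathbb{D}}g\,\conj{k}\,d\sigma=\tfrac1{2\pi}\int_0^{2\pi}g\,\conj{K}\,d\theta$, and for $\psi=zFh$ (so $\psi(0)=0$) one has $\int_{\mathbb{D}}\psi'\,\conj{k}\,d\sigma=\tfrac1{2\pi}\int_0^{2\pi}(\psi/z)\,\conj{k}\,d\theta=\tfrac1{2\pi}\int_0^{2\pi}Fh\,\conj{k}\,d\theta$. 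Substituting $zhF'=(zFh)'-F(zh)'$ in the second area integral, applying the first identity to $F(zh)'$ and the second to $(zFh)'$ recombines the two integrals into exactly the claimed boundary expression, with the coefficients $\tfrac p2$ and $1-\tfrac p2$ emerging from the resulting algebra.

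I expect the main obstacle to be the rigorous justification of the Cauchy--Green step rather than the algebra: one must check that $z|F|^p h$ is regular enough (it is $C^1$ on each $\overline{\mathbb{D}_\rho}$ because $p>1$ forces $|F|^p$, of local order at least $p$ at each zero of $F$, to be continuously differentiable there), that the radius-$\rho$ boundary integrals converge to the radius-one integral as $\rho\to1^-$, and that all area integrals converge. The bounds $F\in H^\infty$ and $F'\in A^r$ from Lemma \ref{kpolybound}, and where needed the trilinear estimate and the partial-sum limit of Lemma \ref{klb}, are precisely what is required to control these passages to the limit.
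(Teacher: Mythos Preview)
Your proposal is correct and follows essentially the same route as the paper: apply the Cauchy--Green theorem to pass from the boundary integral to an area integral, use Theorem \ref{integral_extremal_condition} to replace $|F|^{p-1}\conj{\sgn F}$ by $\conj{k}/\|\phi\|$, and then reverse the Cauchy--Green step (exploiting $(zK)'=k$) to return to the circle with the claimed coefficients. The only minor difference is that for the term involving $zhF'$ you justify the extremal identity via a partial-sum approximation $S_nF\to F$, whereas the paper invokes the $L^1$ extension in the last clause of Theorem \ref{integral_extremal_condition} directly, which is available since $k\in H^\infty$ and $F\in H^\infty$ by Lemma \ref{kpolybound}; Lemma \ref{klb} is not actually needed here.
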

The proof of this Theorem is very similar to 
the proof of Theorem 2.1 in \cite{tjf2}.  
However, the proof in \cite{tjf2} also works if 
$k$ is any $H^q$ function.

\begin{proof}
Note that $F' \in A^{s}$ for some $s > 1$. 
By Ryabykh's theorem, $F \in H^p$.  Now, 
\begin{equation*}
\frac{1}{2\pi} \int_0^{2\pi} |F(e^{i\theta})|^p h(e^{i\theta}) \, d\theta = 
\lim_{r\rightarrow 1} \frac{i}{2\pi r^2} 
\int_{\partial (r{\mathbb{D}})} |F(z)|^p h(z)z\, d\conj{z},
\end{equation*}
where $h$ is any polynomial. 
Apply the Cauchy-Green theorem and take the limit as $r \rightarrow 1$ 
to transform the right-hand side into
\begin{equation*}
\frac{1}{\pi} \int_{{\mathbb{D}}} \left((zh)'F + \frac{p}{2}zhF'\right)|F|^{p-1}\conj{\sgn F}\,dA(z).
\end{equation*}
We may apply 
Theorem \ref{integral_extremal_condition} 
to reduce the last expression 
to  
\begin{equation}\label{eq_normeq1}
\frac{1}{\pi \|\phi\|} 
\int_{{\mathbb{D}}} \left((zh)'F + \frac{p}{2}zhF'\right) \conj{k} \,dA(z). 
\end{equation}
To prepare for a reverse application of the Cauchy-Green theorem, we rewrite 
the integral in \eqref{eq_normeq1} as
\begin{equation*}
\begin{split}
\lim_{r\rightarrow 1} \frac{1}{\pi\|\phi\|} \int_{r{\mathbb{D}}} \bigg[
\frac{\partial}{\partial \conj{z}}\left\{(zh)'F\conj{zK}\right\} &+ 
\frac{p}{2} \frac{\partial}{\partial z}\left\{zhF \conj{k}\right\} \\  
&- \frac{p}{2} \frac{\partial}{\partial \conj{z}}\left\{(zh)'F \conj{zK}\right\} \bigg] dA(z).
\\
\end{split}
\end{equation*}
Since $F$ is in $H^p$ and both $k$ and $K$ are in $H^q$, 
we may apply the Cauchy-Green theorem and 
take the limit as $r \rightarrow 1$ to see
that the above expression equals 
\begin{equation*}
\begin{split} 
&\frac{1}{2\pi i \|\phi\|} \int_{\partial {\mathbb{D}}} (zh)'F \conj{zK}\, dz + 
\frac{ip}{4\pi  \|\phi\|} \int_{\partial {\mathbb{D}}} zh F \conj{k}\, d\conj{z}\\
&\qquad \qquad - 
\frac{p}{4\pi i \|\phi\|} \int_{\partial {\mathbb{D}}} (zh)'F \conj{zK}\, dz 
\\
&= \frac{1}{2\pi\|\phi\|} 
\int_0^{2\pi} \left[ (zh)'F \conj{K} + \frac{p}{2}hF\conj{k} 
- \frac{p}{2}(zh)'F\conj{K} \right] d\theta. 
\\
\end{split}
\end{equation*}
\end{proof}

As in \cite{tjf2}, taking $h=1$ gives the following corollary, 
which we call the ``norm-equality.''
\begin{corollary}\label{norm_equality}{\rm\bf (The Norm-Equality).}
Let $1<p<\infty$, let $k$ be a polynomial that is not identically $0$, 
and let $F$ be the extremal 
function for $k.$  Then 
\begin{equation*}
\frac{1}{2\pi} \int_0^{2\pi} |F(e^{i\theta})|^p d\theta = 
\frac{1}{2\pi \|\phi\|} \int_{0}^{2 \pi}
F \left[\left(\frac{p}{2}\right)\conj{k} +\left(1-\frac{p}{2}\right) \conj{K}\right]\,d\theta.
\end{equation*}
\end{corollary}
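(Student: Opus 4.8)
The plan is to obtain the norm-equality as the special case $h = 1$ of Theorem \ref{norm_formula_ext}. Since the constant function $h \equiv 1$ is a polynomial, it is an admissible choice in the hypothesis of that theorem, and so the displayed identity there applies verbatim. First I would substitute $h = 1$ into the left-hand side of Theorem \ref{norm_formula_ext}: the factor $h(e^{i\theta})$ becomes $1$, leaving $\frac{1}{2\pi}\int_0^{2\pi} |F(e^{i\theta})|^p\,d\theta$, which is exactly the quantity appearing on the left of the corollary.

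Next I would simplify the right-hand side under the same substitution. With $h = 1$ the term $h\conj{k}$ reduces to $\conj{k}$, and the factor $(zh)'$ becomes $(z)' = 1$, so the term $(zh)'\conj{K}$ reduces to $\conj{K}$. The bracketed expression therefore collapses to $\frac{p}{2}\conj{k} + \left(1 - \frac{p}{2}\right)\conj{K}$, while the prefactor $\frac{1}{2\pi\|\phi\|}$ is unaffected. This is precisely the right-hand side of the corollary, so the derivation is complete.

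Because the statement is a direct specialization, there is no genuine obstacle to overcome; the only point requiring care is the reduction $(zh)' = 1$ for the constant polynomial $h$, which is what guarantees that the $\conj{K}$ term survives with the stated coefficient $1 - p/2$ rather than dropping out. Accordingly I would simply invoke Theorem \ref{norm_formula_ext} and record these two substitutions, with no further computation needed.
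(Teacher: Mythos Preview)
Your proposal is correct and matches the paper's own argument exactly: the paper states that taking $h=1$ in Theorem~\ref{norm_formula_ext} gives the corollary, and your substitution $(zh)'=(z)'=1$ is precisely the computation needed.
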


We use the norm-equality to give the following 
theorem, which corresponds with Theorem 2.3 in \cite{tjf2}.  
Unfortunately, the theorem in this article is weaker, and it seems 
difficult to prove a statement as strong as the one in \cite{tjf2}. 
In the statement of the theorem, $F_n \rightharpoonup F$ means that 
$F_n$ converges to $F$ in the weak sense. 
\begin{theorem}\label{thm_cont}
Let $\{k_n\}$ be a sequence of functions in $H^q \setminus \{0\}$
and let 
$k_n \rightarrow k$ in $H^q$, where $k$ is 
not identically zero.
Let $F_n$ be the $A^p$ extremal function for $k_n$ and let 
$F$ be the $A^p$ extremal function for $k.$  
Then $F_n \rightharpoonup F$ in $H^p.$  Furthermore, if 
$k$ and all the $k_n$ are polynomials, then $F \rightarrow F$ in 
$H^p$.  
\end{theorem}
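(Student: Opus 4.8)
The plan is to establish weak convergence first, using the uniform bound coming from Ryabykh's Theorem together with the extremal characterization of the limit, and then to upgrade to norm convergence in the polynomial case by combining the norm-equality with the uniform convexity of $H^p$.

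First I would record that $\{F_n\}$ is bounded in $H^p$. By Ryabykh's Theorem each $F_n$ lies in $H^p$ with $\|F_n\|_{H^p} \le \{[\max(p-1,1)]\,C_p\|k_n\|_{H^q}/\|k_n\|_{A^q}\}^{1/(p-1)}$. Since $k_n \to k$ in $H^q$ we have $\|k_n\|_{H^q} \to \|k\|_{H^q}$, and because $\|\,\cdot\,\|_{A^q} \le \|\,\cdot\,\|_{H^q}$ we also have $k_n \to k$ in $A^q$, so $\|k_n\|_{A^q} \to \|k\|_{A^q} > 0$; hence $\sup_n \|F_n\|_{H^p} < \infty$. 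As $1 < p < \infty$, $H^p$ is reflexive, so every subsequence of $\{F_n\}$ has a weakly convergent sub-subsequence. By the standard subsequence principle, it suffices to show that $F$ is the only possible weak limit of such a subsequence.

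So suppose $F_{n_j} \rightharpoonup G$ in $H^p$, and let $\phi_n$ denote the functional with kernel $k_n$. The inclusion $H^p \hookrightarrow A^p$ is bounded, hence weak-to-weak continuous, so $F_{n_j} \rightharpoonup G$ in $A^p$ as well; together with $k_{n_j} \to k$ in $A^q$ this gives $\phi_{n_j}(F_{n_j}) = \int_{\mathbb{D}} F_{n_j}\conj{k_{n_j}}\,d\sigma \to \int_{\mathbb{D}} G\conj{k}\,d\sigma = \phi(G)$, upon splitting off $\int_{\mathbb{D}} F_{n_j}\conj{(k_{n_j}-k)}\,d\sigma$ and bounding it by $\|F_{n_j}\|_{A^p}\|k_{n_j}-k\|_{A^q}$. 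By \eqref{A_q_isomorphism}, $\|\phi_n - \phi\| \le \|k_n - k\|_{A^q} \to 0$, so $\|\phi_n\| \to \|\phi\| > 0$. Since $F_{n_j}$ is extremal for $k_{n_j}$ we have $\Rp\phi_{n_j}(F_{n_j}) = \|\phi_{n_j}\|$, and letting $j \to \infty$ gives $\Rp\phi(G) = \|\phi\|$. Weak lower semicontinuity of the norm gives $\|G\|_{A^p} \le \liminf_j \|F_{n_j}\|_{A^p} = 1$; were $\|G\|_{A^p} < 1$ we would get $\Rp\phi(G) \le \|G\|_{A^p}\,\|\phi\| < \|\phi\|$, a contradiction. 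Hence $\|G\|_{A^p} = 1$ and $\Rp\phi(G) = \|\phi\|$, so $G$ solves \eqref{norm1} for $k$; by uniqueness of the extremal function, $G = F$, which proves $F_n \rightharpoonup F$ in $H^p$.

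Now suppose $k$ and all the $k_n$ are polynomials. In a uniformly convex space, weak convergence together with convergence of norms forces strong convergence; since $H^p$ is uniformly convex for $1<p<\infty$ and $F_n \rightharpoonup F$, it is enough to show $\|F_n\|_{H^p} \to \|F\|_{H^p}$. Here I would invoke the norm-equality (Corollary \ref{norm_equality}), valid because each kernel is a polynomial:
\begin{equation*}
\|F_n\|_{H^p}^p = \frac{1}{2\pi\|\phi_n\|}\int_0^{2\pi} F_n\left[\frac{p}{2}\conj{k_n} + \left(1 - \frac{p}{2}\right)\conj{K_n}\right]d\theta,
\end{equation*}
where $K_n(z) = \frac{1}{z}\int_0^z k_n(\zeta)\,d\zeta$. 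Since $K$ depends linearly on $k$ and $\|K\|_{H^q} \le \|k\|_{H^q}$, we get $\|K_n - K\|_{H^q} \le \|k_n - k\|_{H^q} \to 0$. Regarding $H^p$ as a closed subspace of $L^p(\mathbb{T})$, weak convergence $F_n \rightharpoonup F$ yields $\int_0^{2\pi} F_n\conj{g}\,d\theta \to \int_0^{2\pi} F\conj{g}\,d\theta$ for every $g \in L^q(\mathbb{T})$; applying this with $g=k$ and $g=K$ and absorbing the errors $\int_0^{2\pi} F_n\conj{(k_n-k)}\,d\theta$ and $\int_0^{2\pi} F_n\conj{(K_n-K)}\,d\theta$ by Hölder's inequality and the uniform bound on $\|F_n\|_{H^p}$, I obtain $\int_0^{2\pi} F_n\conj{k_n}\,d\theta \to \int_0^{2\pi} F\conj{k}\,d\theta$ and likewise with $K$. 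Together with $\|\phi_n\| \to \|\phi\|$, the right-hand side above converges to its analogue for $F$, which by Corollary \ref{norm_equality} equals $\|F\|_{H^p}^p$; hence $\|F_n\|_{H^p} \to \|F\|_{H^p}$ and strong convergence follows. I expect the main obstacle to be the identification of the weak limit in the third paragraph: weak convergence does not preserve the norm, so one must rule out $\|G\|_{A^p} < 1$, which is precisely where the extremal identity $\Rp\phi_{n_j}(F_{n_j}) = \|\phi_{n_j}\|$ and the continuity $\|\phi_n\| \to \|\phi\|$ are essential. The reason only weak convergence is obtained in general, with strong convergence available solely for polynomial kernels, is that the passage to norm convergence rests on Corollary \ref{norm_equality}, which we have only for polynomials.
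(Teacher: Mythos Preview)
Your proof is correct. For the second part (polynomial kernels), you follow exactly the paper's route: the norm-equality forces $\|F_n\|_{H^p}\to\|F\|_{H^p}$, and uniform convexity of $H^p$ upgrades weak convergence to norm convergence. Your treatment is simply more explicit about why the right-hand side of the norm-equality converges.

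For the first part, your argument differs from the paper's in how the weak limit is identified. The paper cites the continuity result from \cite{tjf1} (that $k_n\to k$ in $A^q$ forces $F_n\to F$ in $A^p$, hence pointwise) and then argues by contradiction: any weak $H^p$-limit of a subsequence is also a pointwise limit, so it must equal $F$. You instead identify the weak limit directly, by showing that any weak $H^p$-limit $G$ satisfies $\Rp\phi(G)=\|\phi\|$ and $\|G\|_{A^p}=1$, and then invoking uniqueness of the extremal function. Your route is more self-contained (it avoids the external $A^p$-continuity result) and in effect reproves the relevant special case of \cite{tjf1}; the paper's route is shorter because it outsources that step.
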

Because the operator taking a kernel to its extremal function is not 
linear, one cannot automatically conclude 
that $F_n \rightarrow F$ just because the operator is bounded.  
It seems likely that 
$F_n \rightarrow F$ holds for any $k_n$ and $k$ in $H^q$ such that 
$k_n \rightarrow k$, and not 
just for polynomials, but we do not know a proof of this.

\begin{proof}
The proof is basically identical to the corresponding proof in 
\cite{tjf2}, but we will summarize it for the sake of completeness.

To see that $F_n \rightharpoonup F$ in $H^p$,
note that if $F_n$ did not approach $F$ weakly in $H^p$, then 
since Ryabykh's theorem implies that the sequence 
$\{F_n\}$ is bounded in $H^p$ norm, the
Banach-Alaoglu theorem and the reflexivity of $H^p$ 
would imply that some subsequence would 
converge weakly, and thus 
pointwise, to a function not equal to $F$.  But 
$k_n \rightarrow k$ in $A^q$, and it is proved in \cite{tjf1} that 
this implies $F_n \rightarrow F$ in $A^p$, which implies  
$F_n \rightarrow F$ pointwise, a contradiction.

If $k$ and all the $k_n$ are polynomials, then 
the fact that $F_n \rightharpoonup F$ together with 
the norm-equality implies that 
$\|F_n\|_{H^p} \rightarrow \|F\|_{H^p}$. Since $H^p$ is uniformly convex, 
it follows from $F_n \rightharpoonup F$ and 
$\|F_n\|_{H^p} \rightarrow \|F\|_{H^p}$ that
$F_n \rightarrow F$ in $H^p$. 

\end{proof}

\section{Fourier Coefficients of $|F|^p$}

We now give some results about the Fourier coefficients of $|F|^p$ 
that follow from Theorem \ref{norm_formula_ext}. 
The first result gives information about the Fourier coefficients of 
$|F|^p$ for nonpositive indices.  Since $|F|^p$ is real valued, it also 
indirectly gives information about the Fourier coefficients for 
positive indices. 
\begin{theorem}\label{fourier_fp}
Let $1<p<\infty$.  Let $k$ be a polynomial (not the zero polynomial), 
let 
$F$ be the $A^p$ extremal function for $k$, and define 
$K$ by equation \eqref{K_eq}. Then for any integer $m \ge 0$, 
\begin{equation*}
\frac{1}{2\pi}\int_0^{2\pi} |F(e^{i\theta})|^p e^{im\theta} d\theta = 
\frac{1}{2\pi \|\phi\|} \int_0^{2\pi} Fe^{im\theta} 
\left[ \left(\frac{p}{2}\right) \conj{k} + \left( 1-\frac{p}{2}\right)(m+1)\conj{K}\right]
d\theta.
\end{equation*}
\end{theorem}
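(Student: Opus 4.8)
The plan is to recognize this theorem as a direct specialization of Theorem \ref{norm_formula_ext}, obtained by choosing the test polynomial $h$ appropriately. Since $m \ge 0$, the monomial $h(z) = z^m$ is a genuine polynomial, so Theorem \ref{norm_formula_ext} applies to it verbatim. On the unit circle we have $h(e^{i\theta}) = e^{im\theta}$, which already produces the factor $e^{im\theta}$ appearing on the left-hand side of the desired identity.

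The one computation I would carry out explicitly is the evaluation of the quantity $(zh)'$ that enters the right-hand side of Theorem \ref{norm_formula_ext}. With $h(z) = z^m$ we have $zh = z^{m+1}$, hence $(zh)' = (m+1)z^m$, and so on the boundary $(zh)'\big|_{z = e^{i\theta}} = (m+1)e^{im\theta}$. This is precisely where the factor $(m+1)$ multiplying $\conj{K}$ comes from. Substituting $h(e^{i\theta}) = e^{im\theta}$ and $(zh)' = (m+1)e^{im\theta}$ into the formula of Theorem \ref{norm_formula_ext} gives
\begin{equation*}
\frac{1}{2\pi}\int_0^{2\pi} |F(e^{i\theta})|^p e^{im\theta}\, d\theta
= \frac{1}{2\pi\|\phi\|}\int_0^{2\pi}
F\left[\left(\frac{p}{2}\right)e^{im\theta}\conj{k}
+ \left(1-\frac{p}{2}\right)(m+1)e^{im\theta}\conj{K}\right] d\theta,
\end{equation*}
and factoring $e^{im\theta}$ out of the bracket yields exactly the stated identity.

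Since all of the analytic work (the two applications of the Cauchy--Green theorem, the use of Theorem \ref{integral_extremal_condition}, and the passage to the boundary via the limit $r \to 1$) is already packaged inside Theorem \ref{norm_formula_ext}, there is no real obstacle to overcome here; the argument is a clean substitution. The only point that genuinely requires the hypothesis $m \ge 0$ is the requirement that $h(z) = z^m$ be a polynomial, so that Theorem \ref{norm_formula_ext} is applicable—a negative power of $z$ would fail to be analytic on $\mathbb{D}$ and would break the boundary manipulations underlying that theorem. I would therefore state the proof simply as: apply Theorem \ref{norm_formula_ext} with $h(z) = z^m$ and use $(zh)' = (m+1)z^m$.
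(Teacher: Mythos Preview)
Your proposal is correct and matches the paper's own proof exactly: the paper simply says ``Take $h(e^{i\theta}) = e^{im\theta}$ in Theorem \ref{norm_formula_ext},'' and your explicit computation of $(zh)' = (m+1)z^m$ just spells out the one substitution needed.
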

\begin{proof}
Take $h(e^{i\theta}) = e^{im\theta}$ in Theorem 
\ref{norm_formula_ext}. 
\end{proof}

The next result is a bound on the Fourier coefficients of $|F|^p$. 
\begin{theorem}\label{Fp_bound}
Let $1<p<\infty$. Let $k$ be a polynomial that is not the zero polynomial, 
and let $k$ have associated functional $\phi \in (A^p)^*$. 
Let $F$ be the $A^p$ extremal 
function for $k$.  
Define 
\begin{equation*}
b_m = \frac{1}{2\pi}\int_0^{2\pi}|F(e^{i\theta})|^p e^{-im\theta} d\theta,
\end{equation*}
and let 
\begin{equation*}
k(z) = \sum_{n=0}^N c_n z^n.
\end{equation*}
Then, for each $m \ge 0,$
\begin{equation*}
|b_m| = |b_{-m}| \le \frac{p}{2\|\phi\|}  \|F\|_{H^2}
\left[ \sum_{n=m}^N |c_n|^2 \right]^{1/2}.
\end{equation*}
\end{theorem}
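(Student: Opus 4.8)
The plan is to read the bound directly off the Fourier--coefficient identity in Theorem~\ref{fourier_fp}. First note that $|F|^p$ is real valued, so its Fourier coefficients satisfy $b_{-m} = \conj{b_m}$ and hence $|b_m| = |b_{-m}|$; it therefore suffices to estimate $b_{-m}$ for $m \ge 0$, which is exactly the quantity computed in Theorem~\ref{fourier_fp}. Since $k$ is a polynomial, Lemma~\ref{kpolybound} gives $F \in H^\infty \subset H^2$, so every integral below converges absolutely and $F$, $k$, $K$ all possess honest Fourier series on ${\mathbb{T}}$.

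Next I would expand the bracket in Theorem~\ref{fourier_fp} in Taylor coefficients. From \eqref{K_eq} one has $K(z) = \sum_{n=0}^N \frac{c_n}{n+1} z^n$, so on the circle
\[
\frac{p}{2}\conj{k} + \Big(1-\frac{p}{2}\Big)(m+1)\conj{K} = \sum_{n=0}^N \conj{c_n}\, d_n\, e^{-in\theta}, \qquad d_n = \frac{p}{2} + \Big(1-\frac{p}{2}\Big)\frac{m+1}{n+1}.
\]
Writing $F(e^{i\theta}) = \sum_{j\ge 0} a_j e^{ij\theta}$ and multiplying by $e^{im\theta}$, the integral in Theorem~\ref{fourier_fp} becomes an $L^2({\mathbb{T}})$ pairing that, by orthogonality of the exponentials, selects the index $n = j+m$. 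Because $F$ is analytic, $a_j = 0$ for $j < 0$, so the only surviving terms are those with $n \ge m$, giving
\[
b_{-m} = \frac{1}{\|\phi\|}\sum_{n=m}^N a_{n-m}\,\conj{c_n}\, d_n.
\]
This collapse to $n \ge m$ is precisely what produces the lower summation limit $n = m$ in the asserted bound.

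From here I would apply the Cauchy--Schwarz inequality to the finite sum:
\[
|b_{-m}| \le \frac{1}{\|\phi\|}\Big(\sum_{n=m}^N |a_{n-m}|^2\Big)^{1/2}\Big(\sum_{n=m}^N |c_n|^2\, d_n^2\Big)^{1/2}.
\]
The first factor is at most $\big(\sum_{j\ge 0}|a_j|^2\big)^{1/2} = \|F\|_{H^2}$, and the second factor is at most $\big(\max_{m\le n\le N} |d_n|\big)\big(\sum_{n=m}^N |c_n|^2\big)^{1/2}$. Thus the whole estimate reduces to the single multiplier bound $|d_n| \le p/2$ for $n \ge m$.

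Establishing that multiplier bound is the step I expect to be the crux. For $n \ge m$ one has $0 < \frac{m+1}{n+1} \le 1$, so $d_n = (1-t)\tfrac{p}{2} + t\cdot 1$ with $t = \frac{m+1}{n+1}$ is a convex combination of $\frac{p}{2}$ and $1$; the inequality $d_n \le p/2$ then hinges on the sign of $1 - \frac{p}{2}$, and this is exactly the delicate point where the ordering $n \ge m$ must be used. Everything else is routine once this estimate is in hand.
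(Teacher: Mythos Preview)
Your approach---read off the Fourier coefficient from Theorem~\ref{fourier_fp}, collapse the pairing to the sum $b_{-m}=\|\phi\|^{-1}\sum_{n=m}^N a_{n-m}\,\conj{c_n}\,d_n$, and apply Cauchy--Schwarz---is exactly the route the paper takes (the paper omits the argument, referring to the identical proof in \cite{tjf2}). The reduction and the Cauchy--Schwarz step are correct, and your identification of the multiplier estimate $|d_n|\le p/2$ as the crux is on target.

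The gap is that this multiplier estimate is \emph{false} for $1<p<2$. With $t=\tfrac{m+1}{n+1}\in(0,1]$ you have $d_n=(1-t)\tfrac{p}{2}+t\cdot 1$, so at $n=m$ (where $t=1$) one gets $d_m=1>p/2$. The ordering $n\ge m$ that you invoke only yields $t\le 1$, which places $d_n$ in the interval between $\tfrac{p}{2}$ and $1$; thus the sharp conclusion is $|d_n|\le\max(\tfrac{p}{2},1)$. For $p\ge 2$ this equals $p/2$ and your argument goes through verbatim (this is precisely the range handled in \cite{tjf2}, where $p$ is an even integer). For $1<p<2$ the argument delivers only the constant $1$ in place of $p/2$. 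That weaker constant is entirely adequate for every application in the paper (in particular Corollary~\ref{order_k_linfty_condition}, where only some finite constant is needed), but the inequality with $p/2$ as stated does not follow from this route when $p<2$; you have flagged the delicate point without actually surmounting it.
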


The proof of the theorem is identical to the one found in \cite{tjf2}, 
and thus will be omitted.   
An interesting observation is that this theorem implies that 
$|F|^p$ is a trigonometric polynomial of 
degree at most $N$. 

The estimate in Theorem \ref{Fp_bound} can be used to obtain information 
about the size of $|F|^p$ (and thus of $F$), 
as in the following corollary. 
\begin{corollary}\label{order_k_linfty_condition}
If $k \in H^q \setminus \{0\}$ and 
if $c_n = O(n^{-\alpha})$ for some $\alpha > 3/2$, then $F \in H^\infty$.
\end{corollary}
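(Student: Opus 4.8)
The plan is to reduce to the polynomial case, where Theorem \ref{Fp_bound} applies, and then pass to a limit. First I would set $k_N = S_N k$, the $N^{\text{th}}$ Taylor polynomial of $k$, and let $F_N$ denote the $A^p$ extremal function for $k_N$. Since $1 < q < \infty$ we have $k_N \to k$ in $H^q$, hence in $A^q$ as well, so Theorem \ref{thm_cont} gives $F_N \rightharpoonup F$ weakly in $H^p$; in particular $F_N \to F$ pointwise in ${\mathbb{D}}$, because evaluation at a point is a bounded functional on $H^p$. Thus it suffices to prove that $\sup_N \|F_N\|_{H^\infty} < \infty$, for then $|F(z)| = \lim_N |F_N(z)| \le \sup_N \|F_N\|_{H^\infty}$ for every $z \in {\mathbb{D}}$, whence $F \in H^\infty$. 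Note each $F_N$ lies in $H^\infty$ by Lemma \ref{kpolybound}, so all these quantities are finite to begin with.

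To bound $\|F_N\|_{H^\infty}$ uniformly, I would write $b_m^{(N)}$ for the Fourier coefficients of $|F_N|^p$ and use $F_N \in H^\infty$ to get $\|F_N\|_{H^\infty}^p = \||F_N|^p\|_{L^\infty(\mathbb{T})} \le \sum_m |b_m^{(N)}|$. Theorem \ref{Fp_bound}, applied to the polynomial $k_N$ with coefficients $c_0,\dots,c_N$, bounds each $|b_m^{(N)}|$ by $\frac{p}{2\|\phi_N\|}\|F_N\|_{H^2}[\sum_{n=m}^N |c_n|^2]^{1/2}$. Summing over $m$, using $|b_m^{(N)}| = |b_{-m}^{(N)}|$ and $\sum_{n=m}^N |c_n|^2 \le \sum_{n\ge m}|c_n|^2$, gives
\[
\sum_m |b_m^{(N)}| \le \frac{pT}{\|\phi_N\|}\|F_N\|_{H^2}, \qquad T = \sum_{m=0}^\infty \Big[\sum_{n=m}^\infty |c_n|^2\Big]^{1/2}.
\]
The hypothesis $c_n = O(n^{-\alpha})$ makes the inner tail $O(m^{1/2-\alpha})$, so $T < \infty$ exactly when $\alpha > 3/2$; this is precisely where the numerical threshold in the statement enters. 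Inequality \eqref{A_q_isomorphism} together with $\|k_N\|_{A^q}\to\|k\|_{A^q} > 0$ bounds $1/\|\phi_N\|$ uniformly, and Ryabykh's theorem bounds $\|F_N\|_{H^p}$ uniformly, since $\|k_N\|_{H^q}/\|k_N\|_{A^q} \to \|k\|_{H^q}/\|k\|_{A^q}$.

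The one remaining input is a uniform bound on $\|F_N\|_{H^2}$, and \emph{this is the main obstacle}, because its treatment depends on $p$. When $p \ge 2$ it is immediate, since $\|F_N\|_{H^2} \le \|F_N\|_{H^p}$, which is already controlled. When $1 < p < 2$ the $H^p$ norm no longer dominates the $H^2$ norm, and I would instead bootstrap through the $L^\infty$ bound itself: writing $M_N = \|F_N\|_{H^\infty}$ and interpolating, $\|F_N\|_{H^2}^2 \le M_N^{2-p}\|F_N\|_{H^p}^p$. Feeding this into the displayed estimate yields an inequality of the form $M_N^p \le C\,M_N^{(2-p)/2}$ with $C$ independent of $N$, that is $M_N^{(3p-2)/2} \le C$; since $p > 1$ forces $(3p-2)/2 > 0$, this bounds $M_N$ uniformly. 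The bootstrap is legitimate precisely because each $M_N$ is already known to be finite. Combining the two cases gives $\sup_N \|F_N\|_{H^\infty} < \infty$, and the pointwise limit argument from the first paragraph then finishes the proof.
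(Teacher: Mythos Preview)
Your proof is correct and follows the same strategy as the paper: reduce to polynomial kernels, apply Theorem \ref{Fp_bound}, sum the Fourier-coefficient bounds to control $\||F_N|^p\|_{L^\infty}$, and pass to the limit using pointwise convergence of $F_N$ to $F$. The only difference is in the bootstrap you flag as the ``main obstacle'': where you split into the cases $p\ge 2$ and $1<p<2$ to absorb $\|F_N\|_{H^2}$, the paper uses the single inequality $\|F_N\|_{H^2}\le \|F_N\|_{H^\infty}$ (valid for every $p$), obtaining $\|F_N\|_{H^\infty}^{p}\le C_1\|F_N\|_{H^\infty}+C_2\|F_N\|_{H^p}^p$, and then divides by $\|F_N\|_{H^\infty}$, invoking $\|F_N\|_{H^\infty}\ge \|F_N\|_{A^p}=1$, to get $\|F_N\|_{H^\infty}^{p-1}\le C$ directly without any case distinction.
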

\begin{proof}
Assume first that $k$ is a polynomial. 
Observe that for $m \ge 2$ we have 
\begin{equation*}
\sum_{n=m}^\infty (n^{-\alpha})^2 \le \int_{m-1}^\infty x^{-2\alpha} dx 
= \frac{(m-1)^{1-2\alpha}}{2\alpha-1}, 
\end{equation*}
and thus
\begin{equation*}
\left[ \sum_{n=m}^\infty |c_n|^2 \right]^{1/2}  \le 
C \frac{(m-1)^{(1/2)-\alpha}}{\sqrt{2\alpha-1}},
\end{equation*}
where $C$ is the constant implicit in the expression 
$O(n^{-\alpha})$. 
Thus we have (for $m \ge 2$) that 
\[
|b_m| = |b_{-m}| \le C \frac{p}{2\|\phi\|}  \|F\|_{H^2}
\frac{(m-1)^{(1/2)-\alpha}}{\sqrt{2\alpha-1}} 
\]
Therefore,
\[
\begin{split}
\sum_{m=3}^{\infty} |b_{-m}| = 
\sum_{m = 3}^{\infty} |b_m| 
&\le
C \frac{p}{2\|\phi\|}  \|F\|_{H^2}
 \int_2^\infty \frac{(x-1)^{(1/2)-\alpha}}{\sqrt{2\alpha-1}} \, dx
\\ &\le
C \frac{p}{2\|\phi\|}  \|F\|_{H^2}
\frac{1}{(\alpha - 3/2)\sqrt{2\alpha-1}} .
\end{split}
\]
But this implies that 
\[
\|F\|_{H^\infty}^p = \| |F|^p \|_{L^\infty} \le 
\sum_{n=-\infty}^\infty |b_{m}| \le 
C
\frac{p}{\|\phi\|}  \|F\|_{H^2}
\frac{1}{(\alpha - 3/2)\sqrt{2\alpha-1}} 
 + \sum_{m=-2}^{2} |b_m|.
\]
Since each $|b_m| \le \|F\|_{H^p}^p < \infty$, 
we have that 
\[
\|F\|_{H^\infty}^p \le 
C
\frac{p}{\|\phi\|}  \|F\|_{H^2}
\frac{1}{(\alpha - 3/2)\sqrt{2\alpha-1}} 
 + 5 \|F\|_{H^p}^p.
\]
Since $\|F\|_{H^2} \le \|F\|_{H^\infty} < \infty$, we have that 
\[
\|F\|_{H^\infty}^{p-1} \le 
C \frac{p}{\|\phi\|} 
\frac{1}{(\alpha - 3/2)\sqrt{2\alpha-1}} 
 + 5 \| F \|_{H^p}^p.
\]
Here we have also used the fact that 
$\|F\|_{H^\infty}^{-1} \le \|F\|_{A^p}^{-1} = 1$. 

Now we drop the assumption that 
$k$ is a polynomial.  
Let $F_n$ be the extremal function for 
$S_n k$, and let $\phi_n$ be the corresponding functional. 
By Ryabykh's theorem 
and the fact that $S_n k \rightarrow k$ in $H^q$, 
the sequence $\|F_n\|_{H^p}$ is bounded.  
Now, the above displayed inequality holds with $F_n$ in place of $F$ 
and $\phi_n$ in place of $\phi$,   
since $C$ can be taken to be independent of $n$. 
Also, it follows from the fact that 
$S_n k \rightarrow k$ in $A^q$ that 
$\phi_n \rightarrow \phi$ in $(A^p)^*$, and that
$F_n \rightarrow F$ in $A^p$ and thus 
uniformly on compact subsets.  
Therefore,
\[
\|F\|_{H^\infty}^{p-1} 
\le \liminf_{n \rightarrow \infty} \|F_n\|_{H^\infty}^{p-1} 
\le C \frac{p}{\|\phi\|} 
\frac{1}{(\alpha - 3/2)\sqrt{2\alpha-1}} 
 + 5 \liminf_{n \rightarrow \infty} \| F_n \|_{H^p}^p.
\]
This proves the result.
\end{proof}

\section{Relations Between the Size of the Kernel and Extremal Function}

In this section we show that if $1<p<\infty$ and 
$q \le q_1 < \infty$ and the kernel $k \in H^{q_1}$ then the 
extremal function $F \in H^{(p-1)q_1}$.
For $q_1=q$ the statement reduces to Ryabykh's theorem.  
For $p$ an even integer, this statement and its converse are proved 
in \cite{tjf2}. 
It is still an open problem to decide if the converse holds 
for general $p$, although we prove a 
weaker result similar to it. 

We first prove the following theorem. 
\begin{theorem}\label{extremal_bound}
Let $1<p<\infty$ and 
let $q = p/(p-1)$ be its conjugate exponent. 
Let $F \in A^p$ be the extremal function corresponding to the kernel 
$k \in A^q$, where $k$ is a polynomial.
Let $p \le p_1 < \infty$, and  
$q \le q_1 < \infty$. 
Define $p_2$ by 
\[
\frac{1}{q_1} + \frac{1}{p_1} + \frac{1}{p_2} = 1.\]
Then for every trigonometric polynomial $h$ we have  
\begin{equation*}
\left| \int_0^{2\pi} |F(e^{i \theta})|^p h(e^{i\theta})\, d\theta \right| \le 
C\frac{\|k\|_{H^{q_1}}}{\|k\|_{A^q}} \|F\|_{H^{p_1}} 
\|h\|_{L^{p_2}},
\end{equation*}
where $C$ is some constant depending only on $p$, $p_1$, and $q_1$.
\end{theorem}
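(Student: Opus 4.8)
The plan is to start from the norm formula of Theorem~\ref{norm_formula_ext}, but to work with its disc-integral form rather than its boundary form, so that the derivative terms that appear can be absorbed by Lemma~\ref{klb}. First I would reduce the case of a general trigonometric polynomial $h$ to that of an \emph{analytic} polynomial. Writing $h = h_1 + \conj{h_2}$ with $h_1,h_2$ analytic polynomials, where $h_1$ is the analytic part of $h$ and $\conj{h_2} = h - h_1$, and using that $|F|^p$ is real, we have
\[
\int_0^{2\pi} |F|^p h \, d\theta = \int_0^{2\pi} |F|^p h_1 \, d\theta + \conj{\int_0^{2\pi} |F|^p h_2 \, d\theta}.
\]
Since $1 < p_2 \le \infty$, and since $p_2 = \infty$ occurs only when $q_1 = q$ and $p_1 = p$ (because $1/q_1 + 1/p_1 \le 1/q + 1/p = 1$ with equality only in that case), we may assume $1 < p_2 < \infty$; the excluded case is exactly Ryabykh's theorem together with the estimate \eqref{ryabkh_estimate}. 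For $1 < p_2 < \infty$ the Szeg\H{o} projection is bounded on $L^{p_2}$, so $\|h_1\|_{L^{p_2}}$ and $\|h_2\|_{L^{p_2}}$ are each at most a constant times $\|h\|_{L^{p_2}}$, and it suffices to prove the estimate for analytic polynomials $h$.

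For analytic $h$, I would invoke the identity established in the proof of Theorem~\ref{norm_formula_ext} (equation \eqref{eq_normeq1}), namely
\[
\frac{1}{2\pi}\int_0^{2\pi} |F|^p h \, d\theta = \frac{1}{\pi\|\phi\|}\int_{\mathbb{D}} \left[ (zh)'F + \frac{p}{2}\, zh\, F' \right]\conj{k}\, dA.
\]
After converting $\frac{1}{\pi}dA$ to $d\sigma$, the right-hand side is the sum of $\int_{\mathbb{D}} \conj{k}\, F\, (zh)'\, d\sigma$ and $\frac{p}{2}\int_{\mathbb{D}} \conj{k}\, (zh)\, F'\, d\sigma$, and both are precisely of the form handled by Lemma~\ref{klb}. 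The crucial point is that this lemma bounds such an integral by the product of the Hardy-space norms of its three factors \emph{without} the norm of the differentiated factor, so the derivatives $(zh)'$ and $F'$ cost nothing. Applying the lemma with the exponent $q_1$ on $k$, $p_1$ on $F$, and $p_2$ on $zh$ — legitimate because these exponents satisfy $1/q_1 + 1/p_1 + 1/p_2 = 1$ and meet the size requirements of Lemma~\ref{klb} — each term is bounded by $C\|k\|_{H^{q_1}}\|F\|_{H^{p_1}}\|zh\|_{H^{p_2}}$, and $\|zh\|_{H^{p_2}} = \|h\|_{L^{p_2}}$.

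To apply Lemma~\ref{klb} I must verify its standing hypothesis that the integrand containing the derivative lies in $A^1$, and this is where Lemma~\ref{kpolybound} enters: because $k$ is a polynomial, $F \in H^\infty$ and $F' \in A^r$ for some $r>1$, so both $\conj{k}\,F\,(zh)'$ and $\conj{k}\,(zh)\,F'$ are products of bounded analytic functions with an $A^r$ function and hence lie in $A^1$; the inclusion $F \in H^\infty \subseteq H^{p_1}$ also makes $\|F\|_{H^{p_1}}$ finite. Combining the two term estimates gives
\[
\left| \int_0^{2\pi} |F|^p h \, d\theta \right| \le \frac{C}{\|\phi\|}\|k\|_{H^{q_1}}\|F\|_{H^{p_1}}\|h\|_{L^{p_2}},
\]
and replacing $1/\|\phi\|$ by $C_p/\|k\|_{A^q}$ via \eqref{A_q_isomorphism} yields the claimed inequality; the constant depends only on $p$, $p_1$, and $q_1$, since the constant in Lemma~\ref{klb} depends only on two of the exponents and $p_2$ is determined by $p_1$ and $q_1$. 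The main obstacle is exactly the presence of the derivative terms: a direct H\"older estimate applied to the boundary form of the norm formula would require controlling $\|(zh)'\|_{L^{p_2}}$, which is not dominated by $\|h\|_{L^{p_2}}$. Passing to the disc-integral form and invoking Lemma~\ref{klb}, which is designed to absorb exactly one derivative, is what circumvents this difficulty.
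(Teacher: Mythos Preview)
Your proposal is correct and follows essentially the same route as the paper: reduce to analytic polynomials via the Szeg\H{o} projection (the paper does this at the end rather than the beginning, but the idea is identical), invoke the disc-integral identity \eqref{eq_normeq1} from the proof of Theorem~\ref{norm_formula_ext}, and then apply Lemma~\ref{klb} to each of the two terms, finishing with \eqref{A_q_isomorphism}. Your explicit verification of the $A^1$ hypothesis of Lemma~\ref{klb} via Lemma~\ref{kpolybound} is a point the paper leaves implicit, but otherwise the arguments coincide.
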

Note that the case $p_2 = \infty$ occurs if and only if 
$q=q_1$ and $p=p_1$.  The theorem is then a trivial consequence of Ryabykh's 
theorem, so we need only prove the theorem if $p_2 < \infty$. 
\begin{proof}
Let $h$ be an analytic polynomial.
In the proof of Theorem \ref{norm_formula_ext}, we showed that 
\[
\begin{split}
 \frac{1}{2\pi} \int_0^{2\pi} |F(e^{i\theta})|^p h(e^{i\theta})\, d\theta = 
\frac{1}{\pi\|\phi\|} 
\int_{{\mathbb{D}}} \left((zh)'F + \frac{p}{2}zhF'\right) \conj{k} \,dA(z).\\
\end{split}
\]
Apply Lemma \ref{klb} separately to the two parts of the integral to 
conclude that its absolute value is bounded by
\begin{equation*}
C \frac{1}{\|\phi\|}\|k\|_{H^{q_1}} \|f\|_{H^{p_1}} \|h\|_{H^{p_2}},
\end{equation*}
where $C$ is a constant depending only on $p_1$ and $q_1$. 
Since 
\[\frac{1}{\|\phi\|} \le \frac{C_p}{\|k\|_{A^q}}\] 
by equation \eqref{A_q_isomorphism},
the desired result holds for the case where 
$h$ is an analytic polynomial.
If $h$ is an arbitrary trigonometric polynomial, then as in 
\cite{tjf2} the boundedness of the Szeg\H{o} projection can be used to 
show the result holds.
\end{proof}

For a given $q_1 > q$, we will apply the theorem just proven 
with $p_1 = (p-1) q_1$ and 
with $p_2'$ chosen 
to equal $p_1/p$, where $p_2'$ is the conjugate exponent to $p_2$.
This allows us to bound the $H^{p_1}$ norm of $f$ in terms of 
$\|\phi\|$ and $\|k\|_{H^{q_1}}$ only. 
\begin{theorem}\label{r_ext}
 Let $1 < p < \infty$, and let $q$ be its conjugate 
exponent. Let $F\in A^p$ be the extremal function for a kernel 
$k \in A^q$. 
If for some $q_1$ such that $q \le q_1 < \infty$ the kernel  
$k \in H^{q_1},$ then $F \in H^{p_1}$ for $p_1=(p-1)q_1.$  In fact, 
\begin{equation*}
\|F\|_{H^{p_1}} \le C\left(
\frac{\|k\|_{H^{q_1}}}{\|k\|_{A^q}}\right)^{1/(p-1)},
\end{equation*}
where $C$ depends only on $p$ and $q_1$.
\end{theorem}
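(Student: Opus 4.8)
The plan is to feed the (non-polynomial) test function $h=|F|^{p_1-p}$ into Theorem~\ref{extremal_bound} with $p_1=(p-1)q_1$, so that the left-hand integral becomes $\int_0^{2\pi}|F|^{p_1}\,d\theta$ while the factor $\|h\|_{L^{p_2}}$ on the right reproduces a power of $\|F\|_{H^{p_1}}$ that can be absorbed into the left-hand side. First I would dispose of the case $q_1=q$, which forces $p_1=p$ and $p_2=\infty$; there the assertion is exactly Ryabykh's theorem. So assume $q_1>q$, hence $p_2<\infty$, and set $p_1=(p-1)q_1$. Note $p_1\ge(p-1)q=p>1$, so the hypothesis $p\le p_1$ of Theorem~\ref{extremal_bound} holds and $H^{p_1}$ is reflexive. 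With $p_2$ defined by $1/q_1+1/p_1+1/p_2=1$, a short computation using $q=p/(p-1)$ gives $1/q_1+1/p_1=q/q_1$, hence $p_2=q_1/(q_1-q)=p_1/(p_1-p)$; in particular $(p_1-p)p_2=p_1$.

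Assuming first that $k$ is a polynomial, so that $F\in H^\infty$ by Lemma~\ref{kpolybound} and $F\not\equiv0$ since $\phi\ne0$, I would apply Theorem~\ref{extremal_bound} with $h=|F|^{p_1-p}$. Because $F\in H^\infty$, the boundary function $|F|^{p_1-p}$ is bounded, hence lies in $L^{p_2}$ and is approximable in $L^{p_2}(\mathbb{T})$ by trigonometric polynomials; since $|F|^p\in L^\infty\subseteq L^{p_2'}$, both sides of the inequality in Theorem~\ref{extremal_bound} pass to the limit and the estimate survives for this $h$. Then $\int_0^{2\pi}|F|^p h\,d\theta=\int_0^{2\pi}|F|^{p_1}\,d\theta$, while $(p_1-p)p_2=p_1$ yields $\|h\|_{L^{p_2}}=\|F\|_{H^{p_1}}^{p_1-p}$. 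Substituting and folding the factor $2\pi$ into the constant gives
\[
\|F\|_{H^{p_1}}^{p_1}\le C\,\frac{\|k\|_{H^{q_1}}}{\|k\|_{A^q}}\,\|F\|_{H^{p_1}}^{\,p_1-p+1}.
\]
Since $0<\|F\|_{H^{p_1}}<\infty$ by Lemma~\ref{kpolybound}, I may divide by $\|F\|_{H^{p_1}}^{p_1-p+1}$; the exponent on the left collapses to $p_1-(p_1-p+1)=p-1$, producing the claimed bound for polynomial kernels.

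To remove the polynomial hypothesis I would argue exactly as at the end of the proof of Corollary~\ref{order_k_linfty_condition}. Put $k_n=S_nk$; then $k_n\to k$ in $H^{q_1}$, and hence in $A^q$, so $\|k_n\|_{H^{q_1}}\to\|k\|_{H^{q_1}}$ and $\|k_n\|_{A^q}\to\|k\|_{A^q}\ne0$. Applying the polynomial case to each $k_n$ shows $\{F_n\}$ is bounded in $H^{p_1}$, and the result of \cite{tjf1} (that $k_n\to k$ in $A^q$ forces $F_n\to F$ in $A^p$, hence pointwise) identifies the weak limit of any weakly convergent subsequence of $\{F_n\}$ in the reflexive space $H^{p_1}$ as $F$. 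Weak lower semicontinuity of the $H^{p_1}$ norm then gives
\[
\|F\|_{H^{p_1}}\le\liminf_{n\to\infty}\|F_n\|_{H^{p_1}}\le C\left(\frac{\|k\|_{H^{q_1}}}{\|k\|_{A^q}}\right)^{1/(p-1)}.
\]

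The exponent arithmetic is painless; the genuine work is twofold. The first delicate point is legitimizing the non-polynomial choice $h=|F|^{p_1-p}$: this rests on the a priori regularity $F\in H^\infty$ from Lemma~\ref{kpolybound}, which both supplies the approximation in $L^{p_2}$ and guarantees the finiteness $\|F\|_{H^{p_1}}<\infty$ needed to divide. The second, and the main obstacle, is the passage from polynomial kernels to general $k\in H^{q_1}$ by lower semicontinuity, where one must ensure the bounded approximating extremal functions actually converge weakly to $F$ (not merely that the kernel-to-extremal map is bounded, which would not suffice since that map is nonlinear); this is precisely what the $A^p$-convergence $F_n\to F$ from \cite{tjf1} provides.
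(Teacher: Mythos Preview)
Your proposal is correct and follows essentially the same route as the paper: reduce to polynomial kernels (where Lemma~\ref{kpolybound} gives $F\in H^\infty$), use Theorem~\ref{extremal_bound} with the exponent choice $p_1=(p-1)q_1$, $p_2=p_1/(p_1-p)$ to obtain $\|F_n\|_{H^{p_1}}^{p-1}\le C\|S_nk\|_{H^{q_1}}/\|S_nk\|_{A^q}$, and pass to the limit via weak lower semicontinuity in the reflexive space $H^{p_1}$. The only cosmetic difference is that the paper obtains $\||F_n|^p\|_{L^{p_2'}}$ by taking a supremum over trigonometric polynomials $h$ with $\|h\|_{L^{p_2}}\le 1$, whereas you plug in the explicit dual extremizer $h=|F|^{p_1-p}$ and justify it by $L^{p_2}$-approximation; these are the same duality step viewed from opposite sides.
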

The proof of this theorem is identical to the proof of the corresponding
theorem in \cite{tjf2}, so we give a summary.
\begin{proof}
The case $q_1 = q$ is Ryabykh's theorem, so we assume $q_1 > q.$ 
Let $p_1 = (p-1)q_1$; thus $p_1 > p = (p-1)q.$  
Let $p_2 = p_1/(p_1-p)$, 
so \[
\frac{1}{q_1} + \frac{1}{p_1} + \frac{1}{p_2} = 1\]  
and
$p_2' = p_1/p$ and $1<p_2<\infty$. 
Let $F_n$ denote the extremal function corresponding to the kernel 
$S_n k$ 
(where we choose $n$ large enough so that $S_nk$ is not identically zero).
Then 
for any trigonometric polynomial $h$, Theorem \ref{extremal_bound} implies 
that  
\begin{equation*}
\left|\frac{1}{2\pi}\int_0^{2\pi} |F_n|^p h(e^{i\theta}) d\theta \right|\le 
C \frac{\|S_n k\|_{H^{q_1}}}{\|S_n k\|_{A^q}} 
\|F_n\|_{H^{p_1}} \|h\|_{L^{p_2}}. 
\end{equation*}
Taking the supremum over all trigonometric polynomials $h$ 
with $\|h\|_{L^{p_2}} \le 1$ gives 
\begin{equation*}
\|F_n\|_{H^{p_1}}^p =  \| |F_n|^p\|_{L^{p_2'}} \le
C \frac{\|S_n k\|_{H^{q_1}}}{\|S_n k\|_{A^q}} \|F_n\|_{H^{p_1}}.
\end{equation*}
Because $\|F_n\|_{H^{p_1}} < \infty$ (since $S_n k$ is a polynomial) 
we may divide both sides of the inequality by $\|F_n\|_{H^{p_1}}$ to obtain
\begin{equation*}
\|F_n\|_{H^{p_1}}^{p-1} \le C  \frac{\|S_n k\|_{H^{q_1}}}{\|S_n k\|_{A^q}},
\end{equation*}
where $C$ depends only  on $p$ and $q_1$. Taking the limit as 
$n \rightarrow \infty$ gives the desired result.
\end{proof} 

Recall from Section \ref{intro} that if
$F \in A^p$ has unit norm, there is a 
corresponding kernel $k \in A^q$ such that $F$ is the extremal function for 
$k$, and that this kernel is uniquely determined up to a positive multiple.
Thus, it makes sense to ask if the converse of 
Theorem \ref{r_ext} holds.  That is, does $F \in H^{(p-1)q_1}$ imply
that  
$k \in H^{q_1}$?  If $p$ is an even integer and $q \le q_1 < \infty$ then 
by Theorem 4.3 in \cite{tjf2} this is the case.  
In fact, the proof in \cite{tjf2} works for 
any $q_1$ such that $1 < q_1 < \infty$ (as long as $p$ is an even integer). 
For general $p$ we do not know if the result is still true.
The result does hold if $2 \le p < \infty$ and $1 < q_1 < \infty$ and if 
$F$ is nonvanishing, since the proof in \cite{tjf2} works in that 
case.  For general $F$
we can prove the following weaker result for $2 \le p < \infty$. 

\begin{theorem} Suppose $2 \le p < \infty$ and $1 < q_1 < \infty$. 
Let $F \in A^p$ with $\|F\|_{A^p}=1$, 
and let $k$ be a kernel such that $F$ is the extremal function for $k$. 
Let $p_1 = q_1(p-1)$ and let $p_2 = p q_1 / (q_1 + 1)$. 
If $F \in H^{p_1}$ and $F' \in A^{p_2}$
then   
$k \in H^{q_1}$ and
\begin{equation*}
\frac{\|k\|_{H^{q_1}}}{\|k\|_{A^q}} \le
 \left[\csc\left(\frac{\pi}{p}\right)\right]
\left( \|F\|_{H^{p_1}}^{p-1} + 
     \frac{p-2}{2} (\| F' \|_{A^{p_2}} + |F(0)|)^{p-2} 
       \| F' \|_{A^{p_2}}\right),
\end{equation*} 
where $C_p$ is as in inequality \eqref{A_q_isomorphism}. 
\end{theorem}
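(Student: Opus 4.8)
The plan is to realize $\|k\|_{H^{q_1}}$ by duality and to reduce the boundary pairing of $k$ against an analytic test function to an integral involving $F$ together with its non-analytic companion $u := |F|^{p-2}F$. The starting point is Theorem~\ref{integral_extremal_condition}: since $|F|^{p-1}\conj{\sgn F} = |F|^{p-2}\conj F$, the extremal condition states that $\int_{\mathbb D} h\,|F|^{p-2}\conj F\,d\sigma = \|\phi\|^{-1}\int_{\mathbb D} h\,\conj k\,d\sigma$ for every $h\in A^p$; equivalently, $k/\|\phi\|$ is the Bergman projection of $u$. I would fix an analytic polynomial $g$ and set $h = zg$. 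Applying the Cauchy--Green theorem to $\conj h\,k$ (where $\partial_{\conj z}(\conj h k) = \conj{h'}k$ because $k$ is analytic) and to $\conj h\,u$ (where $\partial_{\conj z}(\conj h u) = \conj{h'}u + \conj h\,u_{\conj z}$), and inserting the projection identity in the form $\int_{\mathbb D}\conj{h'}k\,d\sigma = \|\phi\|\int_{\mathbb D}\conj{h'}u\,d\sigma$, the two boundary integrals combine (using $\conj h\,dz = i\conj g\,d\theta$ on $\mathbb T$) into the key identity
\[
\frac{1}{2\pi}\int_0^{2\pi} k\,\conj g\,d\theta = \|\phi\|\left[\frac{1}{2\pi}\int_0^{2\pi} u\,\conj g\,d\theta - \int_{\mathbb D}\conj{zg}\,u_{\conj z}\,d\sigma\right].
\]
Taking the supremum over analytic $g$ with $\|g\|_{H^{q_1'}}\le 1$ (where $1/q_1+1/q_1'=1$) recovers $\|k\|_{H^{q_1}}$ up to the boundedness of the Szeg\H{o} projection, while $\|\phi\|\le\|k\|_{A^q}$ from \eqref{A_q_isomorphism} converts the prefactor into the ratio $\|k\|_{H^{q_1}}/\|k\|_{A^q}$, the constant being tracked through \eqref{A_q_isomorphism}.

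It remains to bound the two bracketed terms. A direct computation gives $u_{\conj z} = \tfrac{p-2}{2}|F|^{p-4}F^2\conj{F'}$, so $|u_{\conj z}| = \tfrac{p-2}{2}|F|^{p-2}|F'|$; note this vanishes identically when $p=2$, consistent with $k$ being a multiple of $F$. The first term is purely a boundary integral: Hölder with exponents $q_1'$ and $q_1$ gives $\big|\tfrac{1}{2\pi}\int_0^{2\pi} u\,\conj g\,d\theta\big| \le \|g\|_{H^{q_1'}}\,\big\| |F|^{p-1}\big\|_{L^{q_1}} = \|g\|_{H^{q_1'}}\,\|F\|_{H^{p_1}}^{p-1}$, since $(p-1)q_1 = p_1$. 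For the second term I would bound $|zg|\le|g|$ and apply Hölder on the disc to $|g|\cdot|F|^{p-2}\cdot|F'|$ with the three exponents $2q_1'$, $2p_2/(p-2)$, and $p_2$. The decisive point is that these reciprocals sum to exactly $1$: indeed $\tfrac{1}{2q_1'}+\tfrac{p-2}{2p_2}+\tfrac{1}{p_2} = \tfrac12\big(1-\tfrac1{q_1}\big) + \tfrac{p}{2p_2} = \tfrac12 - \tfrac1{2q_1} + \tfrac{q_1+1}{2q_1} = 1$, using $p_2 = pq_1/(q_1+1)$. Hölder then yields the bound $\|g\|_{A^{2q_1'}}\,\|F\|_{A^{2p_2}}^{p-2}\,\|F'\|_{A^{p_2}}$, and two applications of Lemma~\ref{lemma:iso_h_d} replace $\|g\|_{A^{2q_1'}}\le\|g\|_{H^{q_1'}}$ and $\|F\|_{A^{2p_2}}\le\|F\|_{H^{p_2}}$. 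A final use of the second inequality in Lemma~\ref{lemma:iso_h_d}, namely $\|F\|_{H^{p_2}}\le\|F'\|_{A^{p_2}}+|F(0)|$, turns this into the stated form $\tfrac{p-2}{2}\big(\|F'\|_{A^{p_2}}+|F(0)|\big)^{p-2}\|F'\|_{A^{p_2}}$.

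Adding the two estimates and taking the supremum over $g$ gives the claimed inequality, and in particular $k\in H^{q_1}$. The technical heart---and the step I expect to be the main obstacle---is justifying the Cauchy--Green computation for the non-smooth function $u=|F|^{p-2}F$ under only the hypotheses $F\in H^{p_1}$ and $F'\in A^{p_2}$. Because $p\ge 2$, the function $u$ is continuous across the zeros of $F$ and $u_{\conj z}$ is locally bounded there, so no singular contribution arises; nonetheless $u$ need not be $C^1$ up to $\mathbb T$. I would therefore first prove the identity for dilates $F_\rho(z)=F(\rho z)$, each of which is the extremal function for some kernel $k_\rho$ and is analytic on a neighborhood of $\conj{\mathbb D}$, so that the Cauchy--Green theorem applies directly, and then let $\rho\to 1$, using $F_\rho\to F$ in $H^{p_1}$ and $F_\rho'\to F'$ in $A^{p_2}$ together with the lower semicontinuity of the $H^{q_1}$ norm to pass the estimate to the limit, much as in the limiting argument at the end of Corollary~\ref{order_k_linfty_condition}. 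A secondary point requiring care is the precise constant: realizing $\|k\|_{H^{q_1}}$ through the analytic duality pairing involves the norm of the Szeg\H{o} projection, and one must verify that combining this with \eqref{A_q_isomorphism} produces the stated factor $\csc(\pi/p)$.
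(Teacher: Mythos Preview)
Your computation is essentially the paper's: the same Cauchy--Green reduction, the same H\"older split with exponents $q_1,q_1'$ on the boundary term and $2p_2/(p-2),\,p_2,\,2q_1'$ on the area term, and the same two-fold use of Lemma~\ref{lemma:iso_h_d}. The one substantive divergence is on the $k$-side. You apply Cauchy--Green to $\conj h\,k$ to produce the boundary pairing $\tfrac{1}{2\pi}\int_0^{2\pi} k\,\conj g\,d\theta$ directly; but a priori $k$ lies only in $A^q$ and has no boundary values, so this integral is not yet defined---that, rather than the regularity of $u$, is the real circularity. Your proposed fix via dilates $F_\rho$ does not close the gap: the kernel $k_\rho$ attached to $F_\rho/\|F_\rho\|$ is the Bergman projection of the merely continuous function $|F_\rho|^{p-2}\conj{F_\rho}$, hence is known only to lie in the little Bloch space, and its boundary integral is just as undefined as that of $k$.

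The paper sidesteps this by never writing a boundary integral for $k$. It keeps the area pairing $\int_{\mathbb D}\conj{k}\,(zh)'\,d\sigma$, which is well defined for $k\in A^q$ and any polynomial $h$, and bounds it (via exactly your estimates on the $u$-side, applied directly under the hypotheses $F\in H^{p_1}$, $F'\in A^{p_2}$, with no dilation needed) by $C\,\|\phi\|\,\|h\|_{H^{q_1'}}$. This exhibits $h\mapsto\int_{\mathbb D}\conj{k}\,(zh)'\,d\sigma$ as a bounded functional on $H^{q_1'}$, which is then represented by some $\widetilde k\in H^{q_1}$; the identification $\widetilde k=k$ is made by matching Taylor coefficients, replacing $h$ by its primitive $H(z)=\tfrac1z\int_0^z h$ so that $(zH)'=h$, and testing against $h=z^n$. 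Only after $k\in H^{q_1}$ is established does the paper pass to the boundary and invoke the Szeg\H{o} projection to upgrade from analytic to trigonometric test functions, which is where the factor $\csc(\pi/p)$ enters.
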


\begin{proof}
Note first that the case $p=2$ is trivial since then $F$ and $k$ are 
constant multiples of each other, so assume $p \ne 2$. 
Let $q$ denote the exponent conjugate to $p$. 
Let $h$ be a polynomial and let $\phi$ be the functional in $(A^p)^*$ 
corresponding to $k$. Then by Theorem \ref{integral_extremal_condition}
and the Cauchy-Green theorem, 
\begin{equation*}
\begin{split}
&\quad \frac{1}{\|\phi\|} \int_{{\mathbb{D}}} \conj{k(z)} (zh(z))' d \sigma \\ &= 
   \int_{{\mathbb{D}}} |F(z)|^{p-1}\sgn(\conj{F(z)}) (zh(z))' d \sigma \\
 &=  \lim_{r \rightarrow 1}
\int_{r\mathbb{D}} \left\{ \frac{\partial}{\partial z} \left[
           |F|^{p-1} \sgn \conj{F} zh \right] - 
   \frac{p-2}{2} |F|^{p-2} F' \sgn \conj{F}^2 zh \right\} \frac{dA}{\pi} \\
& = \lim_{r \rightarrow 1}
  \frac{i}{2 \pi } \int_{\partial(r \mathbb{D})} 
          |F|^{p-1} \sgn \conj{F} zh \, d \conj{z} - 
   \int_{\mathbb{D}}  \frac{p-2}{2} |F|^{p-2} F' \sgn \conj{F}^2 zh \, d\sigma \\
& = \frac{1}{2 \pi } \int_0^{2\pi} 
          |F|^{p-1} (\sgn \conj{F}) h \, d \theta - 
     \int_{\mathbb{D}}  \frac{p-2}{2} |F|^{p-2} F' \sgn \conj{F}^2 zh \, d\sigma. 
\end{split}
\end{equation*}
Here we have used the fact that $|F|^{p-2} F' \in L^1$, which 
follows from the fact that
$(p-2)/p + 1/p_2 < 1$. 
Now apply H{\"o}lder's inequality to the first integral using 
exponents $q_1$ and $q_1' = q_1 / (q_1 - 1)$, and apply 
it to the second using exponents $2p_2/(p-2)$ and  
$p_2$ and $2q_1'$ 
to obtain that the above expression is bounded above in absolute value 
by 
\[
\|F\|_{H^{p_1}}^{p-1} \|h\|_{H^{q_1'}} + 
     \frac{p-2}{2} \| F \|_{A^{2p_2}}^{p-2} 
       \| F' \|_{A^{p_2}} 
            \|h\|_{A^{2q_1'}}.
\]
But by Lemma \ref{lemma:iso_h_d}, this is at most
\[
\left( \|F\|_{H^{p_1}}^{p-1} + 
     \frac{p-2}{2} (\| F' \|_{A^{p_2}} + |F(0)|)^{p-2} 
       \| F' \|_{A^{p_2}}\right) 
            \|h\|_{H^{q_1'}}.
\]
Let $C$ equal the part of the above expression in parentheses. 
Then 
\[ \left|\int_{{\mathbb{D}}} \conj{k(z)} (zh(z))' d\sigma \right|
 \le 
C \|\phi \| \|h\|_{H_{q_1'}}\] for all polynomials $h$, and we 
can define a continuous linear functional $\psi$ on $H^{q_1'}$ so that 
\[
\psi(h) = \int_{{\mathbb{D}}} \conj{k(z)} (zh(z))' d\sigma
\]
for all polynomials $h$.  
Then $\psi$ has an associated kernel in $H^{q_1}$ (see p.\ 113 of 
\cite{D_Hp}).  Call the kernel $\widetilde{k}.$ 
For $h \in H^{q_1'}$ it follows that 
\[ \psi(h) = \frac{1}{2\pi}\int_0^{2\pi} \conj{\widetilde{k}(e^{i\theta})} 
h(e^{i\theta}) \, d\theta. \]
By the Cauchy-Green theorem, 
\begin{equation}\label{kernel_trick_eqn}
\begin{split}
 \int_{{\mathbb{D}}} \conj{k(z)} (zh(z))' \,d\sigma &= 
\psi(h) 
\\&= \frac{1}{2\pi}\int_0^{2\pi} 
    \conj{\widetilde{k}(e^{i\theta})} h(e^{i\theta}) \,d\theta 
\\ &
=\lim_{r\rightarrow 1} \frac{i}{2\pi}
\int_{\partial (r{\mathbb{D}})} \conj{\widetilde{k}(z)} h(z)z \,d\conj{z} \\
&=
\lim_{r\rightarrow 1} 
\int_{r{\mathbb{D}}} \conj{\widetilde{k}(z)} (zh(z))' \frac{dA}{\pi} \\&= 
\int_{{\mathbb{D}}} \conj{\widetilde{k}(z)} (zh(z))' \,d\sigma,
\end{split}
\end{equation}
where $h$ is any polynomial. 

Define the polynomial $H$ by 
\begin{equation*} H(z) = \frac{1}{z} \int_{0}^z h(\zeta) \, d\zeta.
\end{equation*}
Then substituting $H(z)$ for $h(z)$ in equation \eqref{kernel_trick_eqn}, 
and using the fact that 
$(zH)' = h$, we have 
\[\int_{{\mathbb{D}}} \conj{\widetilde{k}(z)} h(z) \, d\sigma = 
 \int_{{\mathbb{D}}} \conj{k(z)} h(z) \, d\sigma\]
for every polynomial $h$. 
Since $k \in A^q$ and 
$\widetilde{k} \in H^{q_1} \subset A^{2q_1}$, we have 
that the power series for $k$ and $\widetilde{k}$ converge in 
$A^q$ and $A^{2q_1}$ respectively.
Using this fact and choosing $h(z) = z^n$ for $n \in \mathbb{N}$ 
shows that the power series of $k$ and $\widetilde{k}$ are identical, 
and so $k = \widetilde{k}$ and $k \in H^{q_1}$. 

Now for any polynomial $h$,
\begin{equation*}
\left|
\frac{1}{2\pi}\int_0^{2\pi} \conj{k(e^{i\theta})} h(e^{i\theta}) d\theta 
\right |
\le
C\ \|\phi \|\|h\|_{H^{q_1'}} 
\le 
C\ \|k\|_{A^q} \|h\|_{H^{q_1'}}.
\end{equation*}
where we have used inequality \eqref{A_q_isomorphism}.
But for any trigonometric polynomial $h$, we have 
\begin{equation*}
\begin{split}
\left|
\frac{1}{2\pi}\int_0^{2\pi} \conj{k(e^{i\theta})} h(\theta) \, d\theta \right| 
&=
\left|
\frac{1}{2\pi}\int_0^{2\pi} \conj{k(e^{i\theta})} \left[S(h)(e^{i\theta})\right] d\theta  \right|
\\ &\le
C\|k\|_{A^q}  \|S(h)\|_{H^{q_1'}} \\&\le
C \csc\left(\frac{\pi}{p}\right)  \|k\|_{A^q} \|h\|_{L^{q_1'}},
\end{split}
\end{equation*}
where $S$ denotes the Szeg\H{o} projection.  Note that  
$\csc(\pi/p)$ is the norm of the Szeg\H{o} projection on 
$L^p(\partial \mathbb{D})$ 
(see \cite{Hollenbeck_Verbitsky-Riesz}).
Now take the supremum over all trigonometric polynomials $h$ with 
$\|h\|_{L^{q_1'}} \le 1$ and 
divide both sides of the inequality by $\|k\|_{A^q}$. 
\end{proof}
It is interesting to note that the value of $p_2$ 
in the above theorem is less than $p$ no matter the value of $q_1$. 

\section{Open Problems and a Simple Result}

As we have noted, 
unlike in the case in which $p$ is an even integer, we do not know how 
to show that if $F \in H^{(p-1)q_1}$ then $k \in H^{q_1}$.  
However, we can show that a corresponding result holds if we replace the 
Hardy spaces by Bergman spaces.  
This result is not difficult and may be well known, but we do not know of 
anywhere it appears in the literature. 
\begin{theorem}
Let $1 < p < \infty$.
Suppose $k \in A^q$ and $F$ is the $A^p$ extremal function for $k$. 
If $F \in A^{(p-1)q_1}$ for $1 <  q_1 < \infty$, then $k \in A^{q_1}$. 
If $F \in H^\infty$, then $k$ is in the Bloch space, and if $F$ is 
continuous on the closed disc, then $k$ is in the little Bloch space.
\end{theorem}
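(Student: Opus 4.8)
The plan is to reduce every assertion to a mapping property of the Bergman projection $\mathcal{P}$. Recall from the introduction that, since $F$ is the extremal function for $k$, the kernel $k$ is a positive scalar multiple of $\mathcal{P}(|F|^p/\overline{F})$. Writing $g = |F|^p/\overline{F} = |F|^{p-2}F$ (interpreted as $0$ where $F=0$), we have $k = c\,\mathcal{P}(g)$ for some $c>0$, together with the pointwise identity $|g(z)| = |F(z)|^{p-1}$. Since membership of $k$ in any of the spaces in question is unaffected by the positive constant $c$, it suffices to show that $\mathcal{P}(g)$ lies in the asserted space. First I would note that $g$ is genuinely an $L^1(\mathbb{D})$ function, so that $\mathcal{P}(g)$ is well defined: indeed $\int_{\mathbb{D}}|g|\,d\sigma = \int_{\mathbb{D}}|F|^{p-1}\,d\sigma \le \|F\|_{A^p}^{p-1} < \infty$ because $\sigma(\mathbb{D})=1$.

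For the first claim, observe that $F \in A^{(p-1)q_1}$ is precisely the statement that $|F|^{p-1} \in L^{q_1}(d\sigma)$, i.e.\ $g \in L^{q_1}(\mathbb{D})$, since $|g| = |F|^{p-1}$. Because $1 < q_1 < \infty$, the Bergman projection is bounded on $L^{q_1}(\mathbb{D})$, so $\mathcal{P}(g) \in A^{q_1}$ and therefore $k \in A^{q_1}$, as desired.

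For the remaining two claims I would invoke the standard facts that $\mathcal{P}$ carries $L^\infty(\mathbb{D})$ into the Bloch space and $C(\overline{\mathbb{D}})$ into the little Bloch space. If $F \in H^\infty$ then $|g| = |F|^{p-1}$ is bounded, so $g \in L^\infty$ and $k$ lies in the Bloch space. If $F$ extends continuously to $\overline{\mathbb{D}}$, then $g = |F|^{p-2}F$ is continuous on $\overline{\mathbb{D}}$ and $k$ lies in the little Bloch space. The one point requiring a small argument --- and the only place where the hypothesis $p>1$ (rather than merely $p>0$) enters these last two parts --- is the continuity of $g$ at the zeros of $F$: away from the zero set $g$ is continuous as a product of continuous functions, while at a zero $z_0$ of $F$ one has $|g(z)| = |F(z)|^{p-1} \to 0 = g(z_0)$ because $p-1>0$ and $F$ is continuous, so $g$ extends continuously with value $0$ there. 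I expect this verification of continuity at the zeros of $F$, together with quoting the correct $L^\infty$-to-Bloch and $C(\overline{\mathbb{D}})$-to-little-Bloch mapping properties of $\mathcal{P}$, to be the only real content beyond the $L^{q_1}$ estimate; the rest is the now-routine identification of $k$ with a Bergman projection.
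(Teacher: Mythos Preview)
Your proposal is correct and follows exactly the paper's approach: identify $k$ (up to a positive scalar) with $\mathcal{P}(|F|^{p-1}\sgn F)$ and then invoke the mapping properties of the Bergman projection on $L^{q_1}$, on $L^\infty$, and on $C(\overline{\mathbb{D}})$. In fact you supply more detail than the paper does---your verification that $g=|F|^{p-2}F$ is continuous at the zeros of $F$ when $p>1$ is a point the paper simply passes over.
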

\begin{proof}
As stated about, $k$ must be a positive scalar multiple of 
$\mathcal{P}(|F|^{p}/\overline{F}) = \mathcal{P}(|F|^{p-1} \sgn F)$, 
where $\mathcal{P}$ is the Bergman projection. 
The result now follows since the Bergman projection is bounded from 
$L^r$ to $A^r$ for $1 < r < \infty$, and since it maps $L^\infty$ onto the 
Bloch space and the space of continuous functions on the closed disc 
onto the little Bloch space (see e.g.\ \cite{D_Ap}). 
\end{proof}

We now mention some open problems that could motivate further study. 
\begin{enumerate}
\item For $1 < p < \infty$, if $F \in H^{(p-1)q_1}$, is $k \in H^{q_1}$? 
As we have said, this is known from \cite{tjf2} 
to be true if $p$ is an even integer, or if $F$ is nonvanishing and 
$2 \le p < \infty$.
\item Is it the case that if $k \in A^{q_1}$, where $1 < q_1 < \infty$, then 
$F$ must be in $A^{(p-1)q_1}$?  If not, can anything interesting be said about 
the regularity of $F$?
\item If $k$ is in the Bloch space or the little Bloch space, 
can anything 
of interest be said about the regularity of $F$?
\item If $k \in H^\infty$, must $F \in \mathrm{BMO}$?  
If $F \in H^\infty$, must $k \in \mathrm{BMO}$?  
\item 
Does the generalization of Ryabykh's theorem 
(Theorem \ref{r_ext}) hold for $1 < q_1 < q$?
\item 
Is the mapping from kernels to Bergman space extremal functions 
continuous on Hardy spaces? 
Is the mapping from extremal functions to kernels continuous 
on Hardy spaces?  
(Of course, there are multiple kernels with the same extremal 
function, but they are all positive scalar multiples of each other, so 
one can make sense of this question by specifying which kernel is chosen). 
\end{enumerate}

\providecommand{\bysame}{\leavevmode\hbox to3em{\hrulefill}\thinspace}
\providecommand{\MR}{\relax\ifhmode\unskip\space\fi MR }
\providecommand{\MRhref}[2]{%
  \href{http://www.ams.org/mathscinet-getitem?mr=#1}{#2}
}
\providecommand{\href}[2]{#2}


\begin{thebibliography}{10}

\bibitem{Beneteau_Khavinson_survey}
Catherine B{\'e}n{\'e}teau and Dmitry Khavinson, \emph{A survey of linear
  extremal problems in analytic function spaces}, Complex analysis and
  potential theory, CRM Proc. Lecture Notes, vol.~55, Amer. Math. Soc.,
  Providence, RI, 2012, pp.~33--46. \MR{2986891}

\bibitem{Dostanic_BP_Norm}
Milutin~R. Dostani{\'c}, \emph{Two sided norm estimate of the {B}ergman
  projection on {$L^p$} spaces}, Czechoslovak Math. J. \textbf{58(133)} (2008),
  no.~2, 569--575. \MR{2411110 (2009f:32003)}

\bibitem{D_Hp}
Peter Duren, \emph{Theory of {$H\sp{p}$} spaces}, Pure and Applied Mathematics,
  Vol. 38, Academic Press, New York, 1970. \MR{0268655 (42 \#3552)}

\bibitem{DKSS_Pac}
Peter Duren, Dmitry Khavinson, Harold~S. Shapiro, and Carl Sundberg,
  \emph{Contractive zero-divisors in {B}ergman spaces}, Pacific J. Math.
  \textbf{157} (1993), no.~1, 37--56. \MR{1197044 (94c:30048)}

\bibitem{D_Ap}
Peter Duren and Alexander Schuster, \emph{Bergman spaces}, Mathematical Surveys
  and Monographs, vol. 100, American Mathematical Society, Providence, RI,
  2004. \MR{2033762 (2005c:30053)}

\bibitem{tjf1}
Timothy Ferguson, \emph{Continuity of extremal elements in uniformly convex
  spaces}, Proc. Amer. Math. Soc. \textbf{137} (2009), no.~8, 2645--2653.

\bibitem{tjf2}
Timothy Ferguson, \emph{Extremal problems in {B}ergman spaces and an extension
  of {R}yabykh's theorem}, Illinois J. Math. \textbf{55} (2011), no.~2,
  555--573 (2012). \MR{3020696}

\bibitem{tjf3}
\bysame, \emph{Solution of extremal problems in {B}ergman spaces using the
  {B}ergman projection}, Comput. Methods Funct. Theory \textbf{14} (2014),
  no.~1, 35--61. \MR{3194312}

\bibitem{Hedenmalm_canonical_A2}
H{\aa}kan Hedenmalm, \emph{A factorization theorem for square area-integrable
  analytic functions}, J. Reine Angew. Math. \textbf{422} (1991), 45--68.
  \MR{1133317 (93c:30053)}

\bibitem{Hollenbeck_Verbitsky-Riesz}
Brian Hollenbeck and Igor~E. Verbitsky, \emph{Best constants for the {R}iesz
  projection}, J. Funct. Anal. \textbf{175} (2000), no.~2, 370--392.
  \MR{1780482 (2001i:42010)}

\bibitem{Khavinson_McCarthy_Shapiro}
Dmitry Khavinson, John~E. McCarthy, and Harold~S. Shapiro, \emph{Best
  approximation in the mean by analytic and harmonic functions}, Indiana Univ.
  Math. J. \textbf{49} (2000), no.~4, 1481--1513. \MR{1836538 (2002b:41023)}

\bibitem{Khavinson_Stessin}
Dmitry Khavinson and Michael Stessin, \emph{Certain linear extremal problems in
  {B}ergman spaces of analytic functions}, Indiana Univ. Math. J. \textbf{46}
  (1997), no.~3, 933--974. \MR{1488342 (99k:30080)}

\bibitem{Ryabykh_certain_extp}
V.~G. Ryabych, \emph{Certain extremal problems}, Nauchnye Soobscheniya R.G.U.
  (1965), 33--34 ((in Russian)).

\bibitem{Ryabykh}
V.~G. Ryabykh, \emph{Extremal problems for summable analytic functions},
  Sibirsk. Mat. Zh. \textbf{27} (1986), no.~3, 212--217, 226 ((in Russian)).
  \MR{853902 (87j:30058)}

\bibitem{Shapiro_Approx}
Harold~S. Shapiro, \emph{Topics in approximation theory}, Springer-Verlag,
  Berlin, 1971, With appendices by Jan Boman and Torbj\"orn Hedberg, Lecture
  Notes in Math., Vol. 187. \MR{0437981 (55 \#10902)}

\bibitem{Dragan}
Dragan Vukoti{\'c}, \emph{Linear extremal problems for {B}ergman spaces},
  Exposition. Math. \textbf{14} (1996), no.~4, 313--352. \MR{1418027
  (97m:46117)}

\bibitem{Dragan_Isoperimetric}
\bysame, \emph{The isoperimetric inequality and a theorem of {H}ardy and
  {L}ittlewood}, Amer. Math. Monthly \textbf{110} (2003), no.~6, 532--536.
  \MR{1984405}

\end{thebibliography}
\end{document}